\numberwithin{equation}{section}
\def\@journal{aop}
\theoremstyle{plain}
\newcommand{\eqdef}{\stackrel{d}{=}}
\newcommand{\eqas}{\stackrel{a.s.}{=}}
\newcommand{\bX}{\mathbf{X}}
\newcommand{\Zd}{\mathbb{Z}^d}
\newcommand{\SaS}{S \alpha S}
\newcommand{\mC}{\mathcal{C}}
\newcommand{\mD}{\mathcal{D}}
\newtheorem{thm}{Theorem}[section]
\newtheorem{remark}[thm]{Remark}
\newtheorem{example}{Example}[section]
\newtheorem{lemma}[thm]{Lemma}
\begin{document}

\begin{frontmatter}

\title{ERGODIC THEORY, ABELIAN GROUPS, AND POINT PROCESSES INDUCED BY STABLE RANDOM FIELDS}
\runtitle{Point Processes Based on Stable Fields}

\begin{aug}
\author{\fnms{Parthanil} \snm{Roy}\thanksref{t1}\ead[label=e1]{roy@stt.msu.edu}}

\thankstext{t1}{Supported in part by NSF
grant DMS-0303493 and NSF training grant ``Graduate and Postdoctoral
Training in Probability and Its Applications'' at Cornell
University and by the RiskLab of the Department of Mathematics, ETH Zurich.}
\runauthor{P. Roy}

\affiliation{Michigan State University}

\address{Department of Statistics and Probability\\Michigan State University\\ East Lansing, MI 48824-1027\\USA\\
\printead{e1}\\
\phantom{E-mail: roy@stt.msu.edu}}
\end{aug}

\begin{abstract}

We consider a point process sequence induced by a stationary symmetric $\alpha$-stable $(0 < \alpha < 2)$ discrete parameter random field. It is easy to prove, following the arguments in the one-dimensional case in \cite{resnick:samorodnitsky:2004}, that if the random field is generated by a dissipative group action then the point process sequence converges weakly to a cluster Poisson process. For the conservative case, no general result is known even in the one-dimensional case. We look at a specific class of stable random fields generated by conservative actions whose effective dimensions can be computed using the structure theorem of finitely generated abelian groups. The corresponding point processes sequence is not tight and hence needs to be properly normalized in order to ensure weak convergence. This weak limit is computed using extreme value theory and some counting techniques.

\end{abstract}

\begin{keyword}[class=AMS]
\kwd[Primary ]{60G55}
\kwd[; secondary ]{60G60, 60G70, 37A40.}
\end{keyword}

\begin{keyword}
\kwd{Stable process, random field, point process, random measure, weak convergence, extreme
 value theory, ergodic theory, group action.}
\end{keyword}

\end{frontmatter}

\section{Introduction} Suppose that  $\mathbf{X}:=\{X_t\}_{t \in \mathbb{Z}^d}$ is a stationary symmetric $\alpha$-stable $(S \alpha S)$ discrete-parameter random
field. In other words, every finite linear combination $\sum_{i=1}^k c_i X_{t_i+s}$ follows an $S \alpha S$ distribution which does not depend on $s \in \Zd$. We consider the following sequence of point processes on $[-\infty,\infty] \setminus \{0\}$
\begin{equation}
N_n=\sum_{\|t\|_\infty \leq n}
\delta_{b_n^{-1}X_t}\,,\;\;\;\;n=1,2,3,\ldots
\label{defn_of_N_scaling_with_b_n}
\end{equation}
induced by the random field $\mathbf{X}$ with an aptly chosen
sequence of scaling constants $b_n \uparrow \infty$. Here $\delta_x$
denotes the point mass at $x$. We are interested in the weak
convergence of this point process sequence in the space
$\mathcal{M}$ of Radon measures on $[-\infty,\infty] \setminus \{0\}$ equipped
with the vague topology. This is important in extreme value theory
because a number of limit theorems for various functionals of
$S\alpha S$ random fields can be obtained by continuous mapping
arguments on the associated point process sequence. See, for
example, \cite{resnick:1987}, and \cite{balkema:embrechts:2007} for a background on weak convergence of point processes and its applications to extreme value theory. See also \cite{neveu:1977}, \cite{kallenberg:1983}, and \cite{resnick:2007}.

If $\{X_t\}_{t \in \mathbb{Z}^d}$ is an iid collection of random
variables with tails decaying like those of a symmetric $\alpha$
stable distribution then $\{b_n\}$ can be chosen as follows:
\begin{equation}
b_n=n^{d/\alpha}\,.     \label{usual_choice_of_b_n}
\end{equation}
With the above choice, the sequence $\{N_n\}$ converges weakly in
the space $\mathcal{M}$ to a Poisson random measure, whose intensity
blows up near zero (this is the reason why we exclude zero from the
state space) due to clustering of normalized observations. See, once again, \cite{resnick:1987}.
Cluster Poisson processes are obtained as weak limits
also for the point processes induced by a stationary stochastic
process with the marginal distributions having balanced regularly varying
tail probabilities provided the process is a
moving average (see \cite{davis:resnick:1985}) or it satisfies
some mild mixing conditions (see \cite{davis:hsing:1995}). In these works, weak limits of various functionals of the process were computed from the point process convergence by clever use of the continuous mapping theorem. See also
\cite{mori:1977} for various possible weak limits of a
two-dimensional point process induced by strong mixing sequences.

When the dependence structure is not necessarily
local or mild, finding a suitable scaling sequence and computation
of the weak limit both become challenging. As in the one-dimensional
case in \cite{resnick:samorodnitsky:2004}, we will observe that for
point processes induced by stable random fields the choice of $\{b_n\}$ depends on the
heaviness of the tails of the marginal distributions as well as on
the length of memory. In the short memory case the choice
\eqref{usual_choice_of_b_n} of normalizing constants is appropriate
whereas in the long memory case it is not. Furthermore, the observations
may cluster so much due to long memory that one may
need to normalize the sequence $\{N_n\}$ itself to ensure weak
convergence. This phenomenon was also observed in the one-dimensional
case in \cite{resnick:samorodnitsky:2004}.

This paper is organized as follows. We present some background materials on stationary symmetric $\alpha$-stable random fields in Section \ref{sec_background}. Section \ref{sec_diss_case_point_process} deals with the point processes associated with dissipative actions, i.e., point processes based on mixed moving averages. In Section \ref{sec_point_processes_and_gp_th} we state our main result on the weak convergence of the point process sequence induced by a class of random fields generated by conservative actions whose effective dimensions can be computed using group theory. This result is proved in Section \ref{sec_proof_of_main_result} using extreme value theory and counting techniques. Finally, an example is discussed in Section \ref{sec_example}. Throughout this paper, we use the notation $c_n \sim d_n$ to mean that $c_n/d_n$ converges to a positive number as $n \rightarrow \infty$.

\section{Preliminaries} \label{sec_background}

It is well-known that every $\SaS$ random field $\mathbf{X}$ has an integral representation of the form
\begin{eqnarray}
X_t&\eqdef& \int_{S} f_t(s)M(ds),\;\; t \in \mathbb{Z}^d\,,
\label{repn_integral_SaS}
\end{eqnarray}
where $M$ is an $S \alpha S$ random measure on some standard Borel
space $(S,\mathcal{S})$ with $\sigma$-finite control measure $\mu$
and $f_t \in L^\alpha(S,\mu)$ for all $t \in \mathbb{Z}^d$. See, for example, Theorem $13.1.2$ of
\cite{samorodnitsky:taqqu:1994}. The representation $(\ref{repn_integral_SaS})$ is called an integral representation of $\{X_t\}$. Without loss of generality we can also assume that the
family $\{f_t\}$ satisfies the full support assumption
\begin{eqnarray}
\mbox{Support}\Bigl( f_t,\, t \in
\mathbb{Z}^d\Bigr)=S \label{condn_full_support}
\end{eqnarray}
because we can always replace $S$ by
$S_0=\mbox{Support}\bigl( f_t,\, t \in \mathbb{Z}^d\bigr)$ in
$(\ref{repn_integral_SaS})$.

For a stationary $\{X_t\}$, using the fact that
the action of the group $\mathbb{Z}^d$ on $\{X_t\}_{t \in
\mathbb{Z}^d}$ by translation of indices preserves the law together with
certain rigidity properties of the spaces $L^\alpha,\, \alpha<2$ it has been shown in \cite{rosinski:1995} (for $d=1$) and \cite{rosinski:2000} (for a general $d$) that there always
exists an integral representation of the form
\begin{eqnarray}
f_t(s)&=&c_t(s){\left(\frac{d \mu \circ \phi_t}{d
\mu}(s)\right)}^{1/\alpha}f \circ \phi_t(s),\;\; t \in
\mathbb{Z}^d\,, \label{repn_integral_stationary}
\end{eqnarray}
where $f \in L^{\alpha}(S,\mu)$, $\{\phi_t\}_{t \in \mathbb{Z}^d}$
is a nonsingular $\mathbb{Z}^d$-action on $(S, \mu)$ (i.e., each $\phi_t:S \rightarrow S$ is measurable, $\phi_0$ is the identity map on $S$, $\phi_{t_1+t_2}=\phi_{t_1} \circ \phi_{t_2}$ for all $t_1, t_2 \in \Zd$ and each $\mu \circ \phi_t^{-1}$ is an equivalent measure of $\mu$), and $\{c_t\}_{t
\in \mathbb{Z}^d}$ is a measurable cocycle for $\{\phi_t\}$ taking
values in $\{-1, +1\}$ (i.e., each $c_t$ is a measurable map $c_t:S
\rightarrow \{-1, +1\}$ such that for all $t_1,t_2 \in \Zd$, $c_{t_1+t_2}(s)=c_{t_2}(s) c_{t_1}\big(\phi_{t_2}(s)\big)$ for $\mu$-a.a. $s \in S$).

Conversely, if $\{f_t\}$ is of the form $(\ref{repn_integral_stationary})$ then
$\{X_t\}$ defined by $(\ref{repn_integral_SaS})$ is a stationary $S\alpha S$
random field. We will say that a stationary $S\alpha S$ random field
$\{X_t\}_{t\in \mathbb{Z}^d}$ is generated by a nonsingular
$\mathbb{Z}^d$-action $\{\phi_t\}$ on $(S, \mu)$ if it has an
integral representation of the form $(\ref{repn_integral_stationary})$ satisfying
$(\ref{condn_full_support})$.

A measurable set $W \subseteq S$ is called a wandering set for the nonsingular $\Zd$-action
$\{\phi_t\}_{t \in \Zd}$ (as defined above) if $\{\phi_t(W):\;t\in \Zd\}$ is a pairwise
disjoint collection. Proposition $1.6.1$
of \cite{aaronson:1997} gives a decomposition of $S$ into two
disjoint and invariant parts as follows: $S=\mC \cup \mD$ where $\mathcal{D} = \cup_{t \in \Zd} \phi_t(W)$ for some wandering set $W \subseteq S$, and $\mathcal{C}$ has no wandering subset of positive $\mu$-measure. $\mathcal{D}$ is called the dissipative part, and
$\mathcal{C}$ is called the conservative part of the action. The action
$\{\phi_t\}$ is called conservative if $S=\mathcal{C}$ and
dissipative if $S=\mathcal{D}$. The reader is suggested to read \cite{aaronson:1997} and \cite{krengel:1985} for various ergodic theoretical notions used in this paper. Following the notations of \cite{rosinski:1995}, \cite{rosinski:2000} and \cite{roy:samorodnitsky:2008} we can obtain the following unique (in law) decomposition of the random field $\bX$
\begin{equation}
X_t \eqdef  \int_{\mC} f_t(s)M(ds)+\int_{\mD} f_t(s)M(ds)=:X^{\mC}_t+X^{\mD}_t,\;\; t \in \mathbb{Z}^d.
\label{decomp_of_X_t}
\end{equation}
into a sum of two independent random fields $\bX^\mathcal{C}$ and $\bX^\mathcal{D}$ generated by conservative and dissipative $\Zd$-actions respectively. This decomposition implies that it is enough to study stationary $\SaS$ random fields generated by conservative and dissipative actions.

Stationary stable random fields generated by conservative actions are expected to have longer
memory than those  generated by dissipative actions because
a conservative action ``does not wander too much'', and so the same values
of the random measure $M$ in \eqref{repn_integral_stationary} contribute to observations $X_t$ far
separated in $t$. The length of  memory of stable random fields determines, among other things, the rate of growth of the partial maxima sequence
\begin{equation}
M_n := \max_{\|t\|_\infty \leq n}|X_t|, \;\;\; n
=0,1,2,\ldots\,. \label{maxm}
\end{equation}
If $X_t$ is generated by a conservative action, the partial maxima sequence \eqref{maxm} grows at a slower rate because longer memory prevents
erratic changes in $X_t$ even when $t$ becomes ``large''. More specifically,
\begin{equation}
n^{-d/\alpha} M_n \Rightarrow \left\{
                                     \begin{array}{ll}
                                     c_\bX Z_\alpha & \mbox{ if $\bX$ is generated by a dissipative action} \\
                                     0              & \mbox{ if $\bX$ is generated by a conservative action}
                                     \end{array}
                              \right. \label{conv_of_M_n}
\end{equation}
weakly as $n \rightarrow \infty$. Here $Z_\alpha$ is a standard Frech\'{e}t type extreme value random variable with distribution function
\begin{equation}
P(Z_\alpha \leq x)=e^{-x^{-\alpha}},\;\,x > 0,  \label{cdf_of_Z_alpha}
\end{equation}
and $c_\bX$ is a positive constant depending on the random field $\bX$. The above dichotomy, which was established in the $d=1$ case by \cite{samorodnitsky:2004a} and in the general case by \cite{roy:samorodnitsky:2008}, implies that the choice of scaling sequence \eqref{usual_choice_of_b_n} is not appropriate in the conservative case since all the points in the sequence $\{N_n\}$ will be driven to zero by this normalization. On the other hand, we will see in the next section that \eqref{usual_choice_of_b_n} is indeed a good choice when the underlying action is dissipative.

\section{The Dissipative Case} \label{sec_diss_case_point_process}

\noindent Assume, in this section, that $\mathbf{X}$ is a stationary $S\alpha S$ discrete
parameter random field generated by a dissipative
$\mathbb{Z}^d$-action. In this case $\bX$ has the following mixed moving average representation:
\begin{eqnarray}
\mathbf{X} \eqdef \left\{\int_{W \times
{\mathbb{Z}}^d}f(v,t+s)\,M(dv,ds)\right\}_{t \in {\mathbb{Z}}^d}\,,
\label{repn_mixed_moving_avg}
\end{eqnarray}
where $f \in L^{\alpha}(W \times {\mathbb{Z}}^d, \nu \otimes \zeta)$, $\zeta$ is the counting measure on ${\mathbb{Z}}^d$, $\nu$ is a
$\sigma$-finite measure on a standard Borel space $(W,
\mathcal{W})$, and $M$ is a $\SaS$ random measure on $W \times {\mathbb{Z}}^d$ with control measure $\nu
\otimes \zeta$. Mixed moving averages were first introduced by \cite{surgailis:rosinski:mandrekar:cambanis:1993}. The above representation was established in the $d=1$ case by \cite{rosinski:1995} and in the general case by \cite{roy:samorodnitsky:2008} based on a previous work by \cite{rosinski:2000}.

Suppose $\nu_\alpha$ is the
symmetric measure on $[-\infty,\infty] \setminus \{0\}$ given by
$$\nu_\alpha(x,\infty]=\nu_\alpha[-\infty,-x)=x^{-\alpha},\;\;x>0\,.$$
Without loss of generality, we assume that the original stable
random field is of the form given in \eqref{repn_mixed_moving_avg}.
Let
\begin{equation}
N=\sum_i \delta_{(j_i,v_i,u_i)} \sim PRM(\nu_\alpha \otimes \nu
\otimes \zeta) \label{defn_of_N}
\end{equation}
be a Poisson random measure on $([-\infty,\infty] \setminus \{0\}) \times W
\times \mathbb{Z}^d$ with mean measure $\nu_\alpha \otimes \nu
\otimes \zeta$. Then from the assumption above it follows that
$\mathbf{X}$ has the following series representation:
\begin{equation}
X_t = {C_\alpha}^{1/\alpha} \sum_{i} j_i f(v_i,u_i+t),\;\;t \in
\mathbb{Z}^d\,, \label{repn_Possion_integral_X_t}
\end{equation}
where $C_\alpha$ is the stable tail constant given by
\begin{equation}
C_\alpha = {\left(\int_0^\infty x^{-\alpha} \sin{x}\,dx
\right)}^{-1}
 =\left\{
  \begin{array}{ll}
  \frac{1-\alpha}{\Gamma(2-\alpha) \cos{(\pi
\alpha/2)}},&\mbox{\textit{\small{if }}}\alpha \neq 1,\\
  \frac{2}{\pi},
&\mbox{\textit{\small{if }}}\alpha = 1.
  \end{array}
 \right. \label{C_alpha_defn}
\end{equation}
See, for example, \cite{samorodnitsky:taqqu:1994}.

It follows from \eqref{conv_of_M_n} that the partial maxima sequence
\eqref{maxm} grows exactly at the rate $n^{d/\alpha}$. As expected,
$b_n \sim n^{d/\alpha}$ turns out to be the right normalization for
the point process \eqref{defn_of_N_scaling_with_b_n} in this case. The following theorem, which is an extension of Theorem $3.1$ in \cite{resnick:samorodnitsky:2004} to the $d>1$ case, states that with this choice of $\{b_n\}$ the limiting random measure is a cluster Poisson random measure even though the
dependence structure is no longer weak or local. The proof is parallel to the one-dimensional case and hence omitted.

\begin{thm} \label{thm_point_process_Z^d_diss} Let $\mathbf{X}$ be the mixed moving average \eqref{repn_mixed_moving_avg}, and define the point process $N_n=\sum_{\|t\|_\infty \leq n} \delta_{{(2n)}^{-d/\alpha}X_t},\,n=1,2,\ldots$ . Then $N_n \Rightarrow N_\ast$ as $n \rightarrow \infty$, weakly in the space $\mathcal{M}$, where $N_\ast$ is a cluster Poisson random measure with representation
\begin{equation}
N_\ast=\sum_{i=1}^\infty \sum_{t \in \mathbb{Z}^d}\delta_{j_i f(v_i,
t)}\,, \label{defn_of_N_star}
\end{equation}
where $j_i$, $v_i$ are as in \eqref{defn_of_N}.
Furthermore, $N_\ast$ is Radon on $[-\infty,\infty] \setminus \{0\}$ with
Laplace functional ($g \geq 0$ continuous with compact support)
\begin{equation}
\psi_{N_\ast}(g)=E\left(e^{-N_\ast (g)}\right) \hspace{2.5in}
\label{form_of_laplace_functional_of_N_star}
\end{equation}
\begin{equation}
=\exp{\bigg\{-\iint_{([-\infty,\infty] \setminus \{0\})\times W}
\Big(1-e^{-\sum_{t \in \mathbb{Z}^d}g\big(x
f(v,t)\big)}\Big)\nu_\alpha(dx)\nu(dv) \bigg\}}. \nonumber
\end{equation}
\end{thm}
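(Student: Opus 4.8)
The plan is to realize $N_n$ as a measurable functional of the driving Poisson random measure $N$ of \eqref{defn_of_N} through the series representation \eqref{repn_Possion_integral_X_t}, and then to establish weak convergence by the standard truncation-and-approximation criterion for point processes (e.g. \cite{resnick:1987}). First I would rescale the magnitude coordinate of $N$ by $b_n^{-1}=(2n)^{-d/\alpha}$. Since the window $\{t:\|t\|_\infty\le n\}$ contains $(2n+1)^d\sim(2n)^d=b_n^\alpha$ lattice points and $\nu_\alpha$ is $\alpha$-homogeneous, the rescaled ``centers'' $(b_n^{-1}C_\alpha^{1/\alpha}j_i,v_i)$, with the location coordinate $u_i$ summed out, converge to a $PRM(\nu_\alpha\otimes\nu)$ on $([-\infty,\infty]\setminus\{0\})\times W$, the stable tail constant being reconciled through the homogeneity of $\nu_\alpha$.

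Next, for fixed $\epsilon>0$ I would retain only the jumps with $|j_i|>\epsilon b_n$. A Campbell-measure computation using $f\in L^\alpha(\nu\otimes\zeta)$ shows that only finitely many such jumps influence the window (up to the negligible $\ell^\alpha$-tail of $f$), almost surely, and that distinct retained jumps are asymptotically well separated. Re-indexing $s=u_i+t$, the contribution of a single retained jump, $\sum_{\|t\|_\infty\le n}\delta_{b_n^{-1}C_\alpha^{1/\alpha}j_if(v_i,u_i+t)}$, converges vaguely to the cluster $\sum_{s\in\mathbb{Z}^d}\delta_{j_if(v_i,s)}$, because $f(v_i,\cdot)\in\ell^\alpha(\mathbb{Z}^d)$ vanishes at infinity while $\{s:\|s-u_i\|_\infty\le n\}\uparrow\mathbb{Z}^d$. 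Continuity of the superposition map then yields $N_n^{(\epsilon)}\Rightarrow N_\ast^{(\epsilon)}:=\sum_{i:\,|j_i|>\epsilon}\sum_{t\in\mathbb{Z}^d}\delta_{j_if(v_i,t)}$.

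The main obstacle is the third step: controlling the jumps that were thrown away. I must show that for every $\delta>0$, $\lim_{\epsilon\downarrow0}\limsup_{n\to\infty}P\big(N_n(\{|y|>\delta\})\ne N_n^{(\epsilon)}(\{|y|>\delta\})\big)=0$, i.e. that the small-jump field $X_t^{(\le\epsilon)}$ produces no spurious points bounded away from the origin. Here I would use that a symmetric $\alpha$-stable sum truncated below level $\epsilon b_n$ has a finite second moment and estimate $E\big[\#\{\|t\|_\infty\le n:\,|b_n^{-1}X_t^{(\le\epsilon)}|>\delta\}\big]$ by a Chebyshev-type bound, exploiting $\sum_t|f(v,t)|^\alpha\in L^1(\nu)$ to show this expectation vanishes as $n\to\infty$ and then $\epsilon\downarrow0$. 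These tail estimates are dimension-free and go through exactly as in the one-dimensional argument of \cite{resnick:samorodnitsky:2004}.

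Finally, I would assemble the pieces. The approximation criterion combined with the preceding two steps gives $N_n\Rightarrow N_\ast$, once $N_\ast^{(\epsilon)}\Rightarrow N_\ast$ as $\epsilon\downarrow0$ is checked; the latter is immediate from the monotone inclusion of clusters and $\int_W\sum_{t}|f(v,t)|^\alpha\,\nu(dv)<\infty$. The same integrability yields $E[N_\ast(\{|y|>\delta\})]=2\delta^{-\alpha}\int_W\sum_{t}|f(v,t)|^\alpha\nu(dv)<\infty$, so $N_\ast$ is Radon on $[-\infty,\infty]\setminus\{0\}$. The Laplace functional \eqref{form_of_laplace_functional_of_N_star} then follows by viewing $N_\ast$ as the image of the $PRM(\nu_\alpha\otimes\nu)$ of centers under the deterministic cluster map $(x,v)\mapsto\{xf(v,t):t\in\mathbb{Z}^d\}$: conditioning on a center contributes the factor $\exp(-\sum_{t}g(xf(v,t)))$, and the exponential formula for Poisson processes gives $\psi_{N_\ast}(g)=\exp\{-\iint(1-e^{-\sum_{t}g(xf(v,t))})\nu_\alpha(dx)\nu(dv)\}$.
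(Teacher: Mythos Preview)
Your proposal is correct and, since the paper omits the proof entirely and simply refers to the one-dimensional argument of \cite{resnick:samorodnitsky:2004}, it is consistent with the intended approach. One small difference worth noting: the argument in \cite{resnick:samorodnitsky:2004} (mirrored in the proof of Theorem~\ref{thm_point_process_Gdiss} here) works via Laplace functionals and truncates in the \emph{support} of $f$, first replacing $N_n$ by the ``linearized'' process $N_n^{(2)}=\sum_i\sum_{\|t\|_\infty\le n}\delta_{b_n^{-1}j_i f(v_i,u_i+t)}$, proving $N_n^{(2)}\Rightarrow N_\ast$ for compactly supported $f$, then removing the compact support assumption, and finally showing $N_n$ and $N_n^{(2)}$ are close via the estimate you call (3.14). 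Your route instead truncates in the \emph{magnitude} of the jumps $|j_i|$ and appeals to continuous mapping on the rescaled PRM of centers; this is an equally standard variant of the one-big-jump heuristic, and the small-jump control you describe is precisely what underlies the $N_n$ vs.\ $N_n^{(2)}$ comparison in the paper's scheme. Either truncation leads to the same destination with essentially the same dimension-free tail estimates.
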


\begin{remark} \label{remark_on_noncons_case}
\textnormal{The above result is true as long as the underlying action is not conservative because Theorem $4.3$ in \cite{roy:samorodnitsky:2008} ensures that the conservative part of the random field (see \eqref{decomp_of_X_t}) will be killed by the normalization \eqref{usual_choice_of_b_n} and hence the mixed moving average part will determine the convergence.}
\end{remark}

\section{Point Processes and Group Theory}  \label{sec_point_processes_and_gp_th}

This section deals with the longer memory case, i.e., the random
field $\bX$ is now generated by a conservative action. In this case, we
know from \eqref{conv_of_M_n} that the partial maxima
sequence \eqref{maxm} of the random field grows at a rate slower
than $n^{d/\alpha}$. Hence \eqref{usual_choice_of_b_n} is
inappropriate in this case. In general, there may or may not exist a
normalizing sequence $\{b_n\}$ that ensures weak convergence of
$\{N_n\}$. See \cite{resnick:samorodnitsky:2004} for examples of
both kinds in the $d=1$ case.

We will work with a specific class of stable random fields generated
by conservative actions for which the effective dimension $p \leq d$
is known. For this class of random fields, the point process
$\{N_n\}$ will not converge weakly to a nontrivial limit for any
choice of the scaling sequence. Even for the most appropriate choice of
$\{b_n\}$, the associated point process won't even be tight (see
Remark \ref{remark_on_nonconv_of_N_n_in_cons_case} below) because of
the clustering effect of extreme observations due to longer memory
of the random field. Hence, in order to ensure weak convergence, we
have to normalize the point process sequence $\{N_n\}$ in addition
to using a normalizing sequence $\{b_n\}$ different from
\eqref{usual_choice_of_b_n} for the points. This phenomenon was also observed in
Example $4.2$ in the one-dimensional case in
\cite{resnick:samorodnitsky:2004}.

Without loss of generality we may assume that the original stable random field is of the form given in \eqref{repn_integral_SaS} and \eqref{repn_integral_stationary}. Following the approach of \cite{roy:samorodnitsky:2008} we view the underlying action as a group of invertible
nonsingular transformations on $(S,\mu)$ and use some basic counting
arguments to analyze the point process $\{N_n\}$.  We start with introducing the appropriate notation.

Consider $A:=\{\phi_t:\,t \in \mathbb{Z}^d\}$ as a subgroup of the
group of invertible nonsingular transformations on $(S,\mu)$ and
define a group homomorphism
\[
\Phi:\mathbb{Z}^d \rightarrow A
\]
by $\Phi(t)=\phi_t$ for all $t \in \mathbb{Z}^d$. Let
$K:=Ker(\Phi)=\{t \in \mathbb{Z}^d:\,\phi_t = 1_S\}$, where $1_S$
denote the identity map on $S$. Then $K$ is a free abelian group and
by the first isomorphism theorem of groups (see, for example,
\cite{lang:2002}) we have
\[
A \simeq \mathbb{Z}^d/K \,.
\]
Hence, by the structure theorem for finitely generated abelian groups (see Theorem $8.5$ in Chapter I of \cite{lang:2002}), we get
\[
A=\bar{F} \oplus \bar{N}\,,
\]
where $\bar{F}$ is a free abelian group and $\bar{N}$ is a finite
group. Assume $rank(\bar{F})=p \geq 1$ and $|\bar{N}|=l$. Since
$\bar{F}$ is free, there exists an injective group homomorphism
\[
\Psi: \bar{F} \rightarrow \mathbb{Z}^d
\]
such that $\Phi \circ \Psi = 1_{\bar{F}}$. Let $F=\Psi(\bar{F})$.
Then $F$ is a free subgroup of $\mathbb{Z}^d$ of rank $p$. In
particular, $p \leq d$.

The rank $p$ is the effective dimension of the random field, giving
more precise information on the choice of normalizing sequence $\{b_n\}$ than the nominal dimension $d$. Theorem $5.4$ in \cite{roy:samorodnitsky:2008} yields a better estimate on the rate of growth of the partial maxima  \eqref{maxm} than \eqref{conv_of_M_n}, namely
\begin{equation}
n^{-p/\alpha} M_n \Rightarrow \left\{
                                     \begin{array}{ll}
                                     c^\prime_\bX Z_\alpha & \mbox{ if $\{\phi_t\}_{t \in F}$ is a dissipative action} \\
                                     0              & \mbox{ if $\{\phi_t\}_{t \in F}$ is a conservative action}.
                                     \end{array}
                              \right. \label{conv_of_M_n_better_est}
\end{equation}
Here $c^\prime_\bX$ is another positive constant depending on $\bX$ and $Z_\alpha$ is as in \eqref{cdf_of_Z_alpha}. Hence we can guess that $b_n \sim
n^{p/\alpha}$ is a legitimate choice of the scaling sequence
provided $\{\phi_t\}_{t \in F}$ is dissipative.

It is easy to check that the sum $F+K$ is direct and
\begin{equation}
\mathbb{Z}^d/G \simeq  \bar{N}\,, \label{isomorph_needed_for_d=p+q}
\end{equation}
where $G=F \oplus K$. Let $x_1+G,\,
x_2+G,\,\ldots\,,x_l+G$ be all the cosets of $G$ in $\mathbb{Z}^d$. We give a group structure to
\begin{equation}
H:=\bigcup_{k=1}^l (x_k + F) \label{defn_of_H}
\end{equation}
as follows. For all $u_1, u_2 \in H$, there exists unique $u \in H$
such that $(u_1+u_2)-u \in K$. We define this $u$ to be $u_1 \oplus
u_2$. Clearly, $H$ becomes a countable abelian group isomorphic to $\mathbb{Z}^d/K$ under the operation $\oplus$ (``addition modulo $K$'').

Define a map $N:H \to
\{0,1,\ldots\}$ as,
\[
N(u):=\min\{\|u+v\|_ \infty: v \in K\}\,.
\]
It is easy to check that $N(\cdot)$ satisfies ``symmetry'': for all $u \in H$,
\begin{equation}
N(u^{-1})=N(u)\,, \label{symmetry_of_N}
\end{equation}
where $u^{-1}$ is the inverse of $u$ in $(H,\oplus)$, and the ``triangle inequality": for all $u_1, u_2
\in H$,
\begin{equation}
N(u_1 \oplus u_2) \leq N(u_1) + N(u_2)\,.
\label{triangle_inequality_of_N}
\end{equation}
Define
\begin{equation}
H_n=\{u \in H: N(u) \leq n\}\,. \label{defn_of_H_n}
\end{equation}
It has been shown in \cite{roy:samorodnitsky:2008} that the $H_n$'s are finite and
\begin{equation}
|H_n| \sim n^p\,.\label{size_of_H_n}
\end{equation}
Also, clearly $H_n \uparrow H$.

If $\{\phi_t\}_{t \in F}$ is a dissipative group action then we get
a dissipative $H$-action $\{\psi_u\}_{u \in H}$ defined by
\begin{equation}
\psi_u = \phi_u \;\;\;\mbox{ for all }u \in H\,.
\label{defn_of_psi_u}
\end{equation}
See, once again, \cite{roy:samorodnitsky:2008}. In this case, if we further assume that the
cocycle in \eqref{repn_integral_stationary} satisfies
\begin{equation}
c_t \equiv 1 \;\;\;\mbox{ for all }t \in K,
\label{assumption_on_c_t_for_t_in_K}
\end{equation}
then it will follow that $\{c_u\}_{u \in H}$ is an $H$-cocycle for
$\{\psi_u\}_{u \in H}$, i.e., for all $u_1,u_2 \in H$,
\[
c_{u_1 \oplus
u_2}(s)=c_{u_1}(s)c_{u_2}\big(\psi_{u_1}(s)\big)\;\;\mbox{for
$\mu$-a.a. }s \in S.
\]
Hence the subfield $\{X_u\}_{u \in H}$ is $H$-stationary and is
generated by the dissipative action $\{\psi_u\}_{u \in H}$. This implies, in particular, that there is a standard Borel space
$(W,\mathcal{W})$ with a $\sigma$-finite measure $\nu$ on it such
that
\begin{equation}
X_u \eqdef \int_{W \times H} h(w,u \oplus s)\,M^\prime(dw,ds),\;\;\;
u \in H, \label{mixed_moving_avg_repn_of_X_u_u_in_H}
\end{equation}
for some $h \in L^{\alpha}(W \times H, \nu \otimes \tau)$, where
$\tau$ is the counting measure on $H$, and  $M^\prime$ is a $S\alpha
S$ random measure on $W \times H$ with control measure $\nu \otimes
\tau$ (see, for example, Remark $2.4.2$ in \cite{roy:2008}).

Once again, we may assume, without loss of generality, that the
original subfield $\{X_u\}_{u \in H}$ is given in the form
\eqref{mixed_moving_avg_repn_of_X_u_u_in_H}. Let
\begin{equation}
N^\prime=\sum_i \delta_{(j_i,v_i,u_i)} \sim PRM(\nu_\alpha \otimes
\nu \otimes \tau) \label{defn_of_N^prime}
\end{equation}
be a Poisson random measure on $([-\infty,\infty] \setminus \{0\}) \times W
\times H$ with mean measure $\nu_\alpha \otimes \nu \otimes \tau$.
The following series representation holds in parallel to
\eqref{repn_Possion_integral_X_t}:
\begin{equation}
X_u = C_\alpha^{1/\alpha}\sum_{i=1}^\infty j_ih(v_i,u_i\oplus
u),\;\;\;u \in H,  \label{series_repn_of_X_u_u_in_H}
\end{equation}
where $C_\alpha$ is the stable tail constant \eqref{C_alpha_defn}.

Let $rank(K)=q \geq
1$ (we can also allow $q=0$ provided we follow the convention
mentioned in Remark $\ref{remark_on_q=0})$. Note that from \eqref{isomorph_needed_for_d=p+q} it follows that $q=d-p$. Choose a basis $\{\bar{u}_1,\bar{u}_2,\ldots,\bar{u}_p\}$ of $F$ and a basis $\{\bar{v}_1,\bar{v}_2,\ldots,\bar{v}_q\}$
of $K$. Let $U$ be the $d \times p$ matrix with $\bar{u}_i$ as its
$i^{th}$ column and V be the $d \times q$ matrix with $\bar{v}_j$ as its
$j^{th}$ column. Define
\begin{equation}
C=\{y \in \mathbb{R}^p: \mbox{ there exists }\lambda \in
\mathbb{R}^q \mbox{ such that } \|Uy+V\lambda\|_\infty \leq 1 \}\,. \nonumber
\end{equation}
Let $|C|$ denote the $p$-dimensional volume of $C$, and for $y \in
C$ denote by $\mathcal{V}(y)$ the $q$-dimensional volume of the
polytope
\[
P_y:=\{\lambda \in \mathbb{R}^q: \|Uy+V\lambda\|_\infty \leq 1\}\,.
\]
Define, for $t \in H$,
\begin{equation}
m(t,n):=\big|[-n\mathbf{1}, n\mathbf{1}] \cap (t+K)\big|\,.
\label{defn_of_m(t)}
\end{equation}
Here $|B|$ denotes the cardinality of the finite set $B$, $\mathbf{1}=(1,1,\ldots,1) \in \mathbb{Z}^d$, and for $u = (u^{(1)}, u^{(2)}, \ldots, u^{(d)})$ and $v =(v^{(1)}, v^{(2)}, \ldots, v^{(d)})$,
\[
[u,v]:=\left \{(t^{(1)}, t^{(2)}, \ldots, t^{(d)}) \in \mathbb{Z}^d: u^{(i)} \leq t^{(i)} \leq v^{(i)} \mbox{ for all }1 \leq i \leq d\right \}.
\]
The following result, which is an extension of Theorem
\ref{thm_point_process_Z^d_diss} (see Remark \ref{remark_on_q=0}
below), states that the weak limit of properly scaled $\{N_n\}$ is a
random measure which is not a point process.
\begin{thm} \label{thm_point_process_Gdiss}
Suppose $\{\phi_t\}_{t \in F}$ is a dissipative group action and
\eqref{assumption_on_c_t_for_t_in_K} holds. Let $\tilde{N}_n =
n^{-q} \sum_{\|t\|_\infty \leq n}
\delta_{(cn)^{-p/\alpha}X_t},\,n=1,2,\ldots$ where
$c=\big(l|C|\big)^{1/p}$. Then $\tilde{N}_n \Rightarrow
\tilde{N}_\ast$ weakly in $\mathcal{M}$, where $\tilde{N}_\ast$ is a
random measure with the following representation
\begin{equation}
\tilde{N}_\ast=\sum_{i=1}^\infty \sum_{u \in H}
\mathcal{V}(\xi_i)\delta_{j_i h(v_i, u)}\,,
\end{equation}
where $\{j_i\}$ and $\{v_i\}$ are as in \eqref{defn_of_N^prime},
$\{\xi_i\}$ is a sequence of iid $p$-dimensional random vectors
uniformly distributed in $C$ independent of $\{j_i\}$ and $\{v_i\}$,
and $\mathcal{V}$ is the continuous function defined on $C$ as
above. Furthermore, $\tilde{N}_\ast$ is Radon on
$[-\infty,\infty] \setminus \{0\}$ with Laplace functional ($g \geq 0$
continuous with compact support)
\begin{equation}
\psi_{\tilde{N}_\ast}(g)=E\left(e^{-\tilde{N}_\ast (g)}\right)
\hspace{2.5in} \label{form_of_laplace_functional_of_tilde_N_star}
\end{equation}
\begin{equation*}
=\exp{\left\{-\frac{1}{|C|}\int_{C} \int_{|x|>0} \int_W
(1-e^{-\mathcal{V}(y)\sum_{w \in H} g(xh(v,w))}) \nu(dv)
\nu_\alpha(dx) dy\right\}} \;.
\end{equation*}

\end{thm}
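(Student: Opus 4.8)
The plan is to turn the sum over the box $\{\|t\|_\infty\le n\}$ in $\Zd$ into a weighted sum over the group $H$, and then to run a point-process limit theorem of the same type as Theorem~\ref{thm_point_process_Z^d_diss}, with the weights $\mathcal{V}(\xi_i)$ and the uniform randomization on $C$ produced by a lattice-point counting argument. First I would note that, since $K$ is the kernel of $\Phi$, one has $\phi_t=\phi_{t'}$ whenever $t-t'\in K$; together with the cocycle identity and assumption \eqref{assumption_on_c_t_for_t_in_K} this forces $c_t=c_{t'}$ as well, so the integrand in \eqref{repn_integral_stationary} satisfies $f_t=f_{t'}$ and hence $X_t=X_{t'}$ a.s.\ whenever $t-t'\in K$. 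Grouping the atoms of the point process by the coset of $K$ they represent and using the unique representative in $H$ then yields the exact identity
\begin{equation*}
\tilde N_n=n^{-q}\sum_{u\in H}m(u,n)\,\delta_{(cn)^{-p/\alpha}X_u}\,,
\end{equation*}
with $m(u,n)$ as in \eqref{defn_of_m(t)}. This already explains why $\tilde N_n$ is genuinely weighted rather than a simple point process, and why the limit must carry weights.

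Next I would establish the counting asymptotics. Writing each $u\in H$ through its $F$-component and letting $y_u$ denote that component rescaled by $n^{-1}$, a Riemann-sum estimate for the number of $\mu\in\mathbb{Z}^q$ with $\|u+V\mu\|_\infty\le n$ gives $n^{-q}m(u,n)\to\mathcal{V}(y)$ whenever $y_u\to y\in C$, and $m(u,n)=0$ once $y_u$ leaves a neighbourhood of $C$. A companion equidistribution statement says that the rescaled locations fill up $C$ uniformly, i.e.\ $(cn)^{-p}\sum_{u\in H_n}\delta_{y_u}$ converges to the uniform law $|C|^{-1}\mathbf{1}_C(y)\,dy$; equivalently $|H_n|\sim n^p$ with limiting constant $l|C|$, which is exactly what makes $c=(l|C|)^{1/p}$ the correct normalization. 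It is in these two facts that the constants are calibrated and the randomizing variables $\xi_i$ (uniform on the symmetric set $C$, with attached weight $\mathcal{V}(\xi_i)$) are born.

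With these reductions in hand I would analyze the driving $PRM$ $N'$ of \eqref{defn_of_N^prime} through the series representation \eqref{series_repn_of_X_u_u_in_H}, exactly as in the dissipative proof and in \cite{roy:samorodnitsky:2008}: under the scaling $j\mapsto(cn)^{-p/\alpha}j$ the $\alpha$-homogeneous measure $\nu_\alpha$ is multiplied by $(cn)^{-p}$, and combining this factor with the location count shows that the rescaled driving measure $\sum_i\delta_{((cn)^{-p/\alpha}j_i,\,v_i,\,y_{u_i})}$, restricted to $u_i\in H_n$, converges to $PRM\big(\nu_\alpha\otimes\nu\otimes|C|^{-1}\mathbf{1}_C\,dy\big)$, the stable tail constant being absorbed as in Theorem~\ref{thm_point_process_Z^d_diss}. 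Only the Poisson points with large $|j_i|$ survive the scaling; a surviving point $(j_i,v_i,u_i)$ produces in the limit the cluster $\sum_{u\in H}\delta_{j_i h(v_i,u)}$, and since the finitely many $u$ carrying appreciable mass $h(v_i,u)$ share, after rescaling by $n^{-1}$, the common location $\xi_i\in C$, the common weight $n^{-q}m(u,n)\to\mathcal{V}(\xi_i)$ multiplies the whole cluster. This is precisely the mechanism by which $\xi_i$ survives here but is summed out in the dissipative case, and it produces $\tilde N_\ast=\sum_i\mathcal{V}(\xi_i)\sum_{u\in H}\delta_{j_i h(v_i,u)}$. I would make this rigorous at the level of the Laplace functional, computing $E\big(e^{-\tilde N_n(g)}\big)$ for $g\ge0$ continuous with compact support, integrating out the independent Poisson points, inserting the counting asymptotics, and checking that the exponent converges to the double integral in \eqref{form_of_laplace_functional_of_tilde_N_star}; Radonness of $\tilde N_\ast$ then follows from $\mathcal{V}$ being bounded on $C$ and $h\in L^\alpha(W\times H,\nu\otimes\tau)$.

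The hard part will be the uniform control of the small Poisson points. Truncating to $|j_i|>\varepsilon$ makes both the cluster bookkeeping and the Laplace computation routine, but one must prove that the aggregate contribution to $\tilde N_n(g)$ of the points with $|j_i|\le\varepsilon$ is negligible as $n\to\infty$ and then $\varepsilon\downarrow0$, uniformly in $n$. This is a second-moment / tightness estimate resting on the summability $h\in L^\alpha$ together with the counting bounds $n^{-q}m(u,n)=O(1)$ and $|H_n|\sim n^p$, which keep the expected number and the typical size of the clusters meeting a set bounded away from $0$ under control. This interchange-of-limits step, rather than the algebra of identifying the limit, is where the real work lies.
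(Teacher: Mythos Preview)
Your heuristics and the counting asymptotics are right and match the paper's Lemma~\ref{lemma_on_C_and_V}: the rewriting $\tilde N_n=\sum_{u\in H_n}\frac{m(u,n)}{n^q}\delta_{(cn)^{-p/\alpha}X_u}$, the convergence $n^{-q}m(u,n)\to\mathcal V(y)$, and the Riemann-sum / equidistribution picture are exactly the ingredients used.

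There is, however, a structural gap in how you propose to make it rigorous. You write that you will compute $E\big(e^{-\tilde N_n(g)}\big)$ by ``integrating out the independent Poisson points.'' This does not work as stated: in $\tilde N_n(g)$ the function $g$ is applied to $X_u=C_\alpha^{1/\alpha}\sum_i j_i h(v_i,u_i\oplus u)$, an \emph{infinite sum} over $i$, so the Laplace functional does not factorize over the Poisson points. The paper (following \cite{resnick:samorodnitsky:2004}) handles this by inserting an intermediate process
\[
\tilde N^{(2)}_n=\sum_{i=1}^\infty\sum_{t\in H_n}\frac{m(t,n)}{n^q}\,\delta_{(cn)^{-p/\alpha}j_i h(v_i,u_i\oplus t)}
\]
in which each Poisson point produces its own atoms; now $\tilde N^{(2)}_n(g)$ is a genuine sum over $i$ and the Laplace functional \emph{does} factorize. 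The paper then (a) shows $\tilde N^{(2)}_n\Rightarrow\tilde N_\ast$ via Laplace functionals, and (b) shows $\rho(\tilde N_n,\tilde N^{(2)}_n)\to0$ in probability. Your ``small Poisson points'' discussion is really about step~(b), but you have not isolated step~(a) or the device $\tilde N^{(2)}_n$ that makes the Laplace computation tractable.

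Two further points. First, your proposed truncation $|j_i|>\varepsilon$ does \emph{not} leave finitely many points, because the mean measure $\nu_\alpha\otimes\nu\otimes\tau$ is still infinite in the $(v,u)$-coordinates; the paper instead truncates $h$ to have compact support in the $H$-variable (i.e.\ $h(v,u)\mathbf 1_{H_M}(u)$), which is what makes the sums finite and the exponent computable, and then removes this truncation at the end. Second, a ``second-moment'' estimate is not available for $\alpha<2$; the negligibility of the remainder in step~(b) is handled in the paper by importing the argument for $(3.14)$ of \cite{resnick:samorodnitsky:2004}, using the uniform bound $n^{-q}m(t,n)\le\kappa_0$ from Lemma~\ref{lemma_on_C_and_V}(iv) together with $h\in L^\alpha$, not a variance bound.
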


\begin{remark} \label{remark_on_q=0}
\textnormal{In the above theorem we can also allow $q$ to be equal
to $0$ provided we follow the convention $\mathbb{R}^0=\{0\}$, which
is assumed to have $0$-dimensional volume equal to $1$. With these
conventions, Theorem \ref{thm_point_process_Gdiss} reduces to
Theorem \ref{thm_point_process_Z^d_diss} when $q=0$. Also, by a reasoning similar to Remark \ref{remark_on_noncons_case} and using Theorem $5.4$ in \cite{roy:samorodnitsky:2008}, one can extend this result to the case when $\{\phi_t\}_{t \in F}$ is not conservative.}
\end{remark}

\begin{remark}[Due to Jan Rosi\'nski]\label{remark_jan} \textnormal{Suppose that $\{\phi_t\}_{t\in\mathbb{Z}^d}$ in \eqref{repn_integral_stationary} is measure-preserving. Define $\tilde{S}:=\{-1,1\} \times S$ and $\tilde{\mu}:=\frac{\delta_{-1}+\delta_1}{2} \otimes \mu$. Then $\psi_t(\varepsilon, s):=\left(\varepsilon c_t(s), \phi_t(s)\right),\,t \in \mathbb{Z}^d$
is a measure-preserving action on $\tilde{S}$ and
$$
X_t \eqdef \int_{\tilde{S}}\tilde{f}(\psi_t(\varepsilon,s))\tilde{M}(d\varepsilon, ds),\;\;t \in \mathbb{Z}^d,
$$ 
where $\tilde{f}(\varepsilon,s):=\varepsilon f(s) \in L^{\alpha}(\tilde{S},\tilde{\mu})$ and $\tilde{M}$ is an $S\alpha S$ random measure on $\tilde{S}$ with control measure $\tilde{\mu}$. This means, in particular, that \eqref{assumption_on_c_t_for_t_in_K} holds. Since all the known stationary $S\alpha S$ random fields are generated by actions that preserve the underlying measure (or an equivalent measure), it follows that \eqref{assumption_on_c_t_for_t_in_K} is not at all a big restriction.
}

\end{remark}

\begin{remark} \label{remark_on_nonconv_of_N_n_in_cons_case} \textnormal{Note that the above theorem together with Lemma
$3.20$ in \cite{resnick:1987} implies that the sequence of point
process \eqref{defn_of_N_scaling_with_b_n} with the choice $b_n \sim
n^{p/\alpha}$ is not tight and hence does not converge weakly in
$\mathcal{M}$. Furthermore, $\{N_n\}$ will not converge weakly to a
nontrivial limit for any other choice of normalizing sequence
$\{b_n\}$. All the points of $\{N_n\}$ will be driven to zero if
$b_n$ grows faster than $n^{p/\alpha}$. This follows from
\eqref{conv_of_M_n_better_est}, which also implies that if we select
$b_n$ to grow slower than $n^{p/\alpha}$ then we will see an
accumulation of mass at infinity. Only $b_n \sim n^{p/\alpha}$
places the points at the right scale, but they repeat so much due to
long memory, that the point process itself has to be normalized by
$n^q$ (the order of the cluster sizes) to ensure weak convergence.}
\end{remark}

\section{Proof of Theorem \ref{thm_point_process_Gdiss}} \label{sec_proof_of_main_result}

The major steps of the proof of Theorem \ref{thm_point_process_Gdiss} are similar to those of the proof of Theorem $3.1$ in \cite{resnick:samorodnitsky:2004}. However, Theorem \ref{thm_point_process_Gdiss} needs some counting which is taken care of mostly by the following lemma about $C$, $\mathcal{V}(y)$ and $m(t,n)$ defined in Section \ref{sec_point_processes_and_gp_th}.

\begin{lemma} \label{lemma_on_C_and_V} With the notations introduced above, we have:\\
(i) $C$ is compact and convex.\\
(ii) $\mathcal{V}(y)$ is a continuous function of $y$.\\
(iii) For all $1 \leq k \leq l$, the functions $m_{k,n}:C \rightarrow \mathbb{R}$ defined by
\[
m_{k,n}(y):=\frac{m\left(x_k+\sum_{i=1}^p
[ny_i]u_i,n\right)}{n^q},\;\;\;n=1,2,\ldots
\]
\big($y=(y_1,\ldots,y_p)$ \big) are uniformly bounded on $C$ and converge (as $n \rightarrow \infty$) to $\mathcal{V}(y)$ for all $y \in C$. \vspace{0.01in}\\
(iv) There is a constant $\kappa_0 >0$ such that $m(t,n)/n^q \leq
\kappa_0$ for all $t\in H$ and for all $n \geq 1$. Also,
\[
\frac{1}{n^p} \sum_{u \in H_n} \frac{m(u,n)}{n^q} \rightarrow l
\int_C \mathcal{V}(y)dy < \infty\,
\]
as $n \rightarrow \infty$. Here $H_n$ is as in \eqref{defn_of_H_n}.
\end{lemma}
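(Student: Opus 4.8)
The plan is to handle the four parts in the order (i), (ii), then the uniform bound, then (iii) and (iv), since the two convergence statements both rest on a lattice-point counting estimate together with that uniform bound. Throughout I will use that the combined $d\times d$ matrix $[U\,|\,V]$ is invertible: its columns $\bar u_1,\dots,\bar u_p,\bar v_1,\dots,\bar v_q$ form a basis of $G=F\oplus K$, a rank-$d$ subgroup of $\Zd$, and hence are linearly independent over $\mathbb{R}$.

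For (i) I would observe that $C$ is the image of $\tilde C:=\{(y,\lambda)\in\mathbb{R}^p\times\mathbb{R}^q:\|Uy+V\lambda\|_\infty\le 1\}=[U\,|\,V]^{-1}\big([-1,1]^d\big)$ under the coordinate projection $(y,\lambda)\mapsto y$. Since $[U\,|\,V]$ is invertible, $\tilde C$ is the continuous preimage of the cube and is therefore compact and convex; a linear projection preserves both properties, so $C$ is compact and convex. For (ii) I would write $\mathcal V(y)=\int_{\mathbb{R}^q}\mathbf 1\{\|Uy+V\lambda\|_\infty\le1\}\,d\lambda$ and prove continuity by dominated convergence: if $y_m\to y$ the integrands converge for every $\lambda$ off the measure-zero boundary $\{\|Uy+V\lambda\|_\infty=1\}$, and because $V$ has full column rank $q$ they are all supported in one fixed bounded subset of $\mathbb{R}^q$, which supplies the dominating function.

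Next I would establish the uniform bound in (iv), which also gives the uniform boundedness asserted in (iii). If $\|t+V\lambda\|_\infty\le n$ and $\|t+V\lambda'\|_\infty\le n$, then $\|V(\lambda-\lambda')\|_\infty\le 2n$, and full rank of $V$ forces $\|\lambda-\lambda'\|\le\mathrm{const}\cdot n$; thus all $\lambda$ counted by $m(t,n)$ lie in a single Euclidean ball of radius $O(n)$, which contains at most $\kappa_0 n^q$ points of $\mathbb{Z}^q$, uniformly in $t\in H$ and $n\ge1$. For the convergence in (iii), set $a_n:=x_k+\sum_i[ny_i]\bar u_i$, an integer vector with $a_n/n\to Uy$; then $m_{k,n}(y)=n^{-q}\,\big|\mathbb{Z}^q\cap nQ_n\big|$, where $Q_n:=\{\mu\in\mathbb{R}^q:\|a_n/n+V\mu\|_\infty\le1\}$ is a convex body of diameter $O(1)$. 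A standard lattice-point count for convex bodies gives $|\mathbb{Z}^q\cap nQ_n|=\mathrm{vol}(nQ_n)+O(n^{q-1})=n^q\,\mathrm{vol}(Q_n)+O(n^{q-1})$, while $\mathrm{vol}(Q_n)\to\mathcal V(y)$ by the same dominated-convergence argument as in (ii) (using $a_n/n\to Uy$). Dividing by $n^q$ yields $m_{k,n}(y)\to\mathcal V(y)$.

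Finally, for the sum in (iv) I would use the disjoint decomposition $H=\bigsqcup_{k=1}^l(x_k+F)$, which holds because the sets $x_k+F\subseteq x_k+G$ lie in distinct cosets of $G$, together with the fact that $u\in H_n$ iff $m(u,n)\ge1$ (both say $\exists\,v\in K$ with $\|u+v\|_\infty\le n$), so that the sum over $H_n$ equals the sum over all of $H$. Writing $u=x_k+Uc$ with $c\in\mathbb{Z}^p$ and noting that $m_{k,n}$ is exactly the step function equal to $m(x_k+Uc,n)/n^q$ on the cube $\tfrac1n[c,c+\mathbf 1)$ of volume $n^{-p}$, I obtain the identity $\frac{1}{n^p}\sum_{u\in H_n}\frac{m(u,n)}{n^q}=\sum_{k=1}^l\int_{\mathbb{R}^p}m_{k,n}(y)\,dy$. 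Each $m_{k,n}$ is bounded by $\kappa_0$, is supported in a fixed bounded neighbourhood of $C$ (since $m_{k,n}(y)>0$ forces $[ny]/n$ into a slight enlargement of $C$, and these enlargements shrink to $C$), and converges a.e.\ to $\mathcal V(y)\mathbf 1_C(y)$ by (iii); dominated convergence then gives $\int_{\mathbb{R}^p}m_{k,n}\to\int_C\mathcal V$, so the total tends to $l\int_C\mathcal V(y)\,dy$, which is finite because $\mathcal V$ is continuous on the compact set $C$. The main obstacle is part (iii): making the passage from the integer count $m_{k,n}(y)$ to the continuous volume $\mathcal V(y)$ rigorous and uniform while the centre $a_n/n$ of the polytope $Q_n$ is itself moving, together with the companion bookkeeping in (iv) of locating the support of $m_{k,n}$ precisely enough for dominated convergence to apply on all of $\mathbb{R}^p$.
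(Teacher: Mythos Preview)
Your proposal is correct, and the overall architecture matches the paper: establish (i) via projection of the compact convex set $\{z:\|[U\,|\,V]z\|_\infty\le 1\}$, prove continuity of $\mathcal V$, then obtain the lattice-count asymptotics for (iii) and assemble (iv) by summing over the cosets $x_k+F$. Where you genuinely diverge from the paper is in the mechanics of (ii), (iii), and (iv).

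For (ii), the paper uses an explicit sandwich $\{\|Uy+V\lambda\|_\infty\le 1-\|U\|_\infty/m\}\subseteq P_{y^{(n)}}\subseteq\{\|Uy+V\lambda\|_\infty\le 1+\|U\|_\infty/m\}$ and lets $m\to\infty$; you instead integrate the indicator and invoke dominated convergence. For (iii), the paper sandwiches $B_n$ between scalings of two \emph{rational} polytopes $C_m$ and $C_m'$ and invokes an Ehrhart-type lattice-point theorem (De~Loera) for each, then lets $m\to\infty$; you bypass this by appealing directly to a uniform convex-body lattice count $|\mathbb Z^q\cap nQ_n|=n^q\,\mathrm{vol}(Q_n)+O(n^{q-1})$, together with $\mathrm{vol}(Q_n)\to\mathcal V(y)$. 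Your route is more elementary in that it avoids the rational-polytope detour, at the cost of needing the $O(n^{q-1})$ error uniform over the moving bodies $Q_n$; this is legitimate since all $Q_n$ are contained in one fixed ball (full rank of $V$ and boundedness of $a_n/n$), which is exactly the hypothesis under which the classical bound holds. For (iv), the paper works with explicit set inclusions for $Q^{(k)}_n$ (equation \eqref{inclusions_of_Q_n}) to squeeze the sum between two Riemann-type integrals over $C$; your step-function identity $\frac{1}{n^p}\sum_{u\in H_n}\frac{m(u,n)}{n^q}=\sum_{k=1}^l\int_{\mathbb R^p}m_{k,n}(y)\,dy$ is exact and cleaner, reducing the limit to a single application of dominated convergence. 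Both routes need the same ingredients (uniform bound $\kappa_0$, eventual vanishing of $m_{k,n}$ off any neighbourhood of $C$), but yours packages them more transparently.
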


\begin{proof} (i) Let $W=[U:V]$ and $z= \left[\begin{array}{cc}
                               y \\
                               \lambda
                              \end{array}
                      \right]$\,. Then $C$ is a projection of the closed and convex set
\[
P:=\{z \in \mathbb{R}^{p+q}: \|Wz\|_\infty \leq 1 \} \,.
\]
To complete the proof of part (i) it is enough to establish that $P$
is bounded. To this end note that the columns of $W$ are
independent over $\mathbb{Z}$ and hence over $\mathbb{Q}$ which
means that there is a $(p+q) \times d$ matrix $Z$ over $\mathbb{Q}$ such that $ZW=I_{p+q}$,
the identity matrix of order $p+q$. From the string of inequalities (for $z \in P$)
\begin{equation*}
\|z\|_\infty = \|ZWz\|_\infty \leq \|Z\|_\infty \|Wz\|_\infty \leq
\|Z\|_\infty
\end{equation*}
the boundedness of $P$ follows. \\

\noindent (ii) Take $\{y^{(n)}\} \subseteq C$ such that $y^{(n)}
\rightarrow y$. Fixing an integer $m \geq 1$  we get that for large
enough $n$, $\|y^{(n)}-y\| \leq \frac{1}{m}$
and hence
\begin{eqnarray*}
&&\left\{\lambda \in \mathbb{R}^q: \|Uy+V\lambda\|_\infty \leq 1-\frac{\|U\|_\infty}{m}\right\} \\
&& \subseteq P_{y^{(n)}} \subseteq \left\{\lambda \in \mathbb{R}^q:
\|Uy+V\lambda\|_\infty \leq 1+\frac{\|U\|_\infty}{m}\right\}\,.
\end{eqnarray*}
First taking the lim sup (and lim inf) as $n \rightarrow \infty$ and
then taking the limit as $m \rightarrow \infty$ we get that
\[
\mathcal{V}(y) \leq \liminf_{n \rightarrow \infty}
\mathcal{V}\big(y^{(n)}\big) \leq \limsup_{n \rightarrow \infty}
\mathcal{V}\big(y^{(n)}\big) \leq \mathcal{V}(y)
\]
which proves part (ii).\\

\noindent (iii) Fix $1 \leq k \leq l$. Let $L=\max_{1 \leq k \leq
l}\|x_k\|_\infty$. We start by showing that for all $y \in C$
\begin{equation}
m_{k,n}(y) \rightarrow \mathcal{V}(y)  \label{limit_of_m(t_in_F)}
\end{equation}
as $n \rightarrow \infty$. Let
\begin {align}
&B_n := \bigg\{\nu \in \mathbb{Z}^q: \big\|x_k+\sum_{i=1}^p
[ny_i]\bar{u}_i+V\nu \big\|_\infty \leq n \bigg\}\,,\;\;\;n \geq 1\,.
\nonumber \intertext{Since the columns of $V$ are linearly
independent over $\mathbb{Z}$, we have} &|B_n|=\big|[-n\mathbf{1},
n\mathbf{1}] \cap (x_k+\sum_{i=1}^p [ny_i]\bar{u}_i+K)\big|=n^q \,
m_{k,n}(y)\,. \label{equality_of_size_of_B_n} \intertext{Define}
&C_m:= \Big\{\lambda \in \mathbb{R}^q: \|Uy+V\lambda\|_\infty \leq 1
-\frac{1}{m}(\sum_{i=1}^p \|\bar{u}_i\|_\infty +L)\Big\}\,,\;\;\;m \geq
1\,. \nonumber
\end{align}
We first fix $m \geq 1$ and claim that for all $n \geq m$
\begin{equation}
\mathbb{Z}^q \cap nC_m \subseteq \mathbb{Z}^q \cap nC_n \subseteq
B_n \,. \label{inequality_of_B_n}
\end{equation}
The first inclusion is obvious. To prove the second one take
\[
\tilde{\nu} \in \mathbb{Z}^q \cap nC_n = \bigg\{\nu \in \mathbb{Z}^q:
\big\|\sum_{i=1}^p ny_i\,\bar{u}_i+V\nu \big\|_\infty \leq n-\sum_{i=1}^p
\|\bar{u}_i\|_\infty-L\bigg\}
\]
and observe that
\begin{eqnarray*}
&&\big\|x_k+\sum_{i=1}^p
[ny_i]\bar{u}_i+V\tilde{\nu} \big\|_\infty \\
&\leq & \|x_k\|_\infty + \big\|\sum_{i=1}^p ny_i\,\bar{u}_i+V\tilde{\nu}
\big\|_\infty + \sum_{i=1}^p \|\bar{u}_i\|_\infty \leq n\,.
\end{eqnarray*}
It follows from \eqref{inequality_of_B_n} and \eqref{equality_of_size_of_B_n} that
\begin{equation}
\frac{|\mathbb{Z}^q \cap nC_m|}{n^q} \leq
\frac{|B_n|}{n^q}=m_{k,n}(y) \, \label{inequality_of_size_of_B_n}
\end{equation}
for all $n \geq m$. Since $C_m$ is a rational polytope (i.e., a
polytope whose vertices have rational coordinates) the left hand
side of \eqref{inequality_of_size_of_B_n} converges to
$Volume(C_m)$, the $q$-dimensional volume of $C_m$ by Theorem $1$ of
\cite{deloera:2005}. Hence \eqref{inequality_of_size_of_B_n} yields
\[
Volume(C_m) \leq \liminf_{n \rightarrow \infty} m_{k,n}(y)\,.
\]
Now taking another limit as $m \rightarrow \infty$ we get
\begin{equation}
\mathcal{V}(y) \leq \liminf_{n \rightarrow \infty} m_{k,n}(y)
\label{liminf_V(y)}
\end{equation}
since $C_m \uparrow P_y$. Defining another sequence of rational
polytopes
\[
C^{\prime}_m:= \Big\{\lambda \in \mathbb{R}^q:
\|Uy+V\lambda\|_\infty \leq 1 +\frac{1}{m}(\sum_{i=1}^p
\|\bar{u}_i\|_\infty +L)\Big\}\,,\;\;\;m \geq 1
\]
and observing that $C^{\prime}_m \downarrow P_y$ as $m\rightarrow
\infty$ we can conclude using a similar argument that
\begin{equation}
\limsup_{n \rightarrow \infty} m_{k,n}(y) \leq \mathcal{V}(y) \,.
\label{limsup_V(y)}
\end{equation}
\eqref{limit_of_m(t_in_F)} follows from \eqref{liminf_V(y)} and
\eqref{limsup_V(y)}.

To establish the uniform boundedness let $R:=\sup_{y \in C}
\|y\|_\infty < \infty$ by part (i). Once again fixing $y \in C$ observe that for $C^{\prime}_1$ defined above we have
\[
C^{\prime}_1 \subseteq \Big\{\lambda \in \mathbb{R}^q:
\|V\lambda\|_\infty \leq 1 +\sum_{i=1}^p \|\bar{u}_i\|_\infty
+L+R\|U\|_\infty\Big\} =:C^\prime
\]
which is another rational polytope. Hence
\[
m_{k,n}(y) \leq \frac{|\mathbb{Z}^q \cap nC^\prime_1|}{n^q} \leq
\frac{|\mathbb{Z}^q \cap nC^\prime|}{n^q}
\]
from which the uniform boundedness follows by another application of Theorem $1$ of \cite{deloera:2005}. \\

\noindent (iv) To establish this part, we start by proving two set
inclusions which will be useful once more later in this section. For
$1 \leq k \leq l$ and $n \geq 1$ define
\begin{align*}
& F_{k,n} =\bigl\{u \in x_k+F: \mbox{ there exists } v \in K \mbox{
such that } u+v \in [-n\mathbf{1}, n\mathbf{1}]\bigr\}\\
\intertext{and}
& Q^{(k)}_n =\{\alpha \in
\mathbb{Z}^p:\,x_k+U\alpha \in F_{k,n}\}\,.
\end{align*}
Clearly
\begin{equation}
H_n=\cup_{k=1}^l F_{k,n}. \label{form_of_H_n_in_terms_of_F_k,n}
\end{equation}
Let $L=\max_{1\leq k \leq l} \|x_k\|_\infty$ as before and
$L^\prime=L+\sum_{i=1}^{p}\|u_i\|_\infty +
\sum_{j=1}^{q}\|v_j\|_\infty$. We claim that for all $n>L^\prime$,
\begin{align}
&\left\{\big([(n-L^\prime)y_1],\ldots,[(n-L^\prime)y_p]\big):\,y\in C\right\} \nonumber \\
& \;\;\subseteq Q^{(k)}_n \subseteq
\left\{\big([(n+L)y_1],\ldots,[(n+L)y_p]\big):\,y\in C\right\}\,.
\label{inclusions_of_Q_n}
\end{align}
To prove the first inclusion, let $y \in C$. Find $\lambda \in
\mathbb{R}^q$ be such that
\[
\|Uy+V\lambda\|_\infty \leq 1\,.
\]
Then we have
\begin{align*}
&\bigg\|x_k+\sum_{i=1}^{p}[(n-L^\prime)y_i]\bar{u}_i+\sum_{j=1}^{q}[(n-L^\prime)\lambda_j]\bar{v}_j\bigg\|_\infty\\
&\;\;\leq L+(n-L^\prime)\bigg\|\sum_{i=1}^{p}y_i \bar{u}_i+\sum_{j=1}^{q}\lambda_j \bar{v}_j\bigg\|_\infty+\sum_{i=1}^{p}\|\bar{u}_i\|_\infty + \sum_{j=1}^{q}\|\bar{v}_j\|_\infty\\
&\;\;\leq n
\end{align*}
proving $x_k+\sum_{i=1}^{p}[(n-L^\prime)y_i]\bar{u}_i \in F_{k,n}$ and
hence the first inclusion in \eqref{inclusions_of_Q_n}. The second
one is easy. If $\alpha \in Q^{(k)}_n$ then for some $\beta \in
\mathbb{Z}^q$
\[
\|x_k+U\alpha+V\beta\|_\infty \leq n\,,
\]
and hence
\[
\|U\alpha+V\beta\|_\infty \leq  n+L\,,
\]
which yields $y=(1/(n+L))\alpha \in C$ and establishes the second
set inclusion in \eqref{inclusions_of_Q_n}.

To prove the uniform boundedness in part (iv) we use
\eqref{inclusions_of_Q_n} as follows:
\begin{align*}
&\sup_{n \geq 1}\sup_{t\in H}\frac{m(t,n)}{n^q}\\
=&\sup_{n \geq 1}\max_{t\in H_n}\frac{m\big(t,n\big)}{n^q}\\
\leq & \max_{1 \leq k \leq l} \sup_{n \geq 1}\max_{\alpha \in Q^{(k)}_n} \frac{m(x_k+U\alpha,n+L)}{n^q}\\
\leq & \max_{1 \leq k \leq l}\sup_{n \geq 1} \sup_{y\in
C}\left(1+\frac{L}{n}\right)^q
\frac{m\big(x_k+\sum_{i=1}^{p}[(n+L)y_i]\bar{u}_i,n+L\big)}{(n+L)^q}\,,
\end{align*}
and this is bounded above by
\begin{equation}
\kappa_0=(1+L)^q \max_{1 \leq k \leq l}\sup_{n \geq 1} \sup_{y\in C}
m_{k,n}(y) \label{defn_of_kappa_0}
\end{equation}
which is finite by part (iii).

Now we prove the convergence in part (iv). Because of \eqref{form_of_H_n_in_terms_of_F_k,n} it is enough to show that for all $1 \leq k \leq l$
\begin{equation}
\frac{1}{n^p} \sum_{u \in F_{k,n}} \frac{m(u,n)}{n^q} \rightarrow
\int_C \mathcal{V}(y)dy \;\;\;(n\rightarrow \infty)\,.
\label{limit_of_avg_of_m_by_n^q_when_u_in_F_kn}
\end{equation}
To prove \eqref{limit_of_avg_of_m_by_n^q_when_u_in_F_kn} we use
\eqref{inclusions_of_Q_n} once again to get the following bound:
\begin{align*}
&\frac{1}{n^p} \sum_{u \in F_{k,n}} \frac{m(u,n)}{n^q} \\
\leq &\left(\frac{n+L}{n}\right)^p \frac{1}{(n+L)^p} \sum_{\alpha \in Q^{(k)}_n} \frac{m(x_k+U\alpha,n+L)}{n^q} \\
\leq &\left(\frac{n+L}{n}\right)^{p+q} \int_C
\frac{m\left(x_k+\sum_{i=1}^{p}[(n+L)y_i]\bar{u}_i,n+L\right)}{(n+L)^q}\,
dy + o(1)\,,
\end{align*}
from which using part (iii) and the dominated convergence theorem we get
\[
\limsup_{n\rightarrow \infty}\frac{1}{n^p} \sum_{u \in F_{k,n}}
\frac{m(u,n)}{n^q} \leq \int_C \mathcal{V}(y)dy\,.
\]
Similarly we can also prove
\[
\liminf_{n\rightarrow \infty}\frac{1}{n^p} \sum_{u \in F_{k,n}}
\frac{m(u,n)}{n^q} \geq \int_C \mathcal{V}(y)dy\,.
\]
\eqref{limit_of_avg_of_m_by_n^q_when_u_in_F_kn} follows from the
above two inequalities. This completes the proof of Lemma
\ref{lemma_on_C_and_V}.
\end{proof}

With the above lemma we are now well-prepared to prove Theorem \ref{thm_point_process_Gdiss}. Following \cite{resnick:samorodnitsky:2004}, we start with the Laplace functional of $\tilde{N}_\ast$,
\begin{eqnarray*}
\psi_{\tilde{N}_\ast}(g)&=& E\left(e^{-\tilde{N}_\ast (g)}\right)\\
                        &=& E\exp\left\{-\sum_{i=1}^\infty \sum_{u \in H}\mathcal{V}(\xi_i)g(j_i h(v_i,u))\right\}
\end{eqnarray*}
which can be shown to be equal to \eqref{form_of_laplace_functional_of_tilde_N_star} using
$$\sum_i \delta_{(j_i,v_i,\xi_i)} \sim PRM\left(\nu_\alpha \otimes \nu \otimes
\frac{1}{|C|}Leb|_C\right)$$
and by the argument used in the computation of the Laplace functional of the limiting point process in Theorem $3.1$ of \cite{resnick:samorodnitsky:2004}.

To prove that $\tilde{N}_\ast$ is Radon we take
$\eta(x)=I_{[-\infty,-\delta]\cup[\delta, \infty]},\;\delta >0$ and
look at
\begin{align}
E\left(\tilde{N}_\ast(\eta)\right)&=E \sum_{i=1}^\infty \sum_{u \in
H} \mathcal{V}(\xi_i)\eta(j_i h(v_i, u)) \nonumber \\
                                  &\leq \|\mathcal{V}\|_\infty \, E\sum_{i=1}^\infty \sum_{u \in H} \eta(j_i h(v_i, u))\,, \label{inequality_Radon}
\end{align}
where $\|\mathcal{V}\|_\infty:=\sup_{y \in C} \mathcal{V}(y) <
\infty$ by Lemma \ref{lemma_on_C_and_V}. It is enough to show that
$E\left(\tilde{N}_\ast(\eta)\right) < \infty$ which follows from \eqref{inequality_Radon} by the exact same
argument used to establish that the limiting point process in Theorem $3.1$ of \cite{resnick:samorodnitsky:2004} is Radon.

Observe that because of \eqref{assumption_on_c_t_for_t_in_K} and the assumption that the original stable random field is of the form given in \eqref{repn_integral_SaS} and \eqref{repn_integral_stationary} it follows that for all $u \in H$ and for all $v \in K$
\[
X_{u+v} \eqas X_u.
\]
As a consequence, $\tilde{N}_n$ can also be written as
\begin{equation*}
\tilde{N}_n=\sum_{t \in
H_n}\frac{m(t,n)}{n^q}\,\delta_{(cn)^{-p/\alpha}X_t}
\end{equation*}
where $m(t,n)$ is as in \eqref{defn_of_m(t)} and $H_n$ is as in
\eqref{defn_of_H_n}. The weak convergence of $\tilde{N}_n$ is
established in two steps in parallel to the proof of Theorem $3.1$ in \cite{resnick:samorodnitsky:2004} as follows: we first show that
\begin{equation*}
\tilde{N}^{(2)}_n :=\sum_{i=1}^\infty \sum_{t \in
H_n}\frac{m(t,n)}{n^q}\,\delta_{(cn)^{-p/\alpha}j_ih(v_i,u_i \oplus
t)}
\end{equation*}
converges to $\tilde{N}_\ast$ weakly in $\mathcal{M}$ and then show
that $\tilde{N}_n$ must have the same weak limit as $\tilde{N}^{(2)}_n$.

We start by proving the weak convergence of $\tilde{N}^{(2)}_n$. The scaling property of $\nu_\alpha$ yields the
Laplace functional of $\tilde{N}^{(2)}_n$ ($g \geq 0$ continuous
with compact support) as
\begin{eqnarray}
&&\;\;\;\;\;\;\;\;\;\;\;\;E\left(e^{-\tilde{N}^{(2)}_n(g)}\right) \hspace{3in} \label{form_of_laplace_functional_of_tilde_N^2_star}\\
&&=\exp{\Bigg\{-\frac{1}{(cn)^p}\int_{|x|>0}\int_W\sum_{u \in H}\Big(1-e^{-\frac{1}{n^q}\sum_{t \in H_n}m(t,n)\,g(xh(v,u\oplus t))}\Big)} \nonumber \\
&&\hspace{3.67in}\nu(dv)\nu_\alpha(dx)\Bigg\}  \nonumber
\end{eqnarray}
which needs to be shown to converge to
\eqref{form_of_laplace_functional_of_tilde_N_star}. As in \cite{resnick:samorodnitsky:2004} we
first assume that $h$ is compactly supported i.e., for some positive
integer $M$
\begin{equation}
h(v,u)I_{W \times H_M^c}(v,u) \equiv 0\,.
\label{assumption_cpt_support_h}
\end{equation}
Recall that each $H_M$ is finite
and $H_M \uparrow H$ as $M \rightarrow \infty$. Using properties
\eqref{symmetry_of_N}, \eqref{triangle_inequality_of_N} and the
compact support assumption \eqref{assumption_cpt_support_h} the
integral in \eqref{form_of_laplace_functional_of_tilde_N^2_star}
becomes
\begin{align*}
&\frac{1}{(cn)^p}\iint\sum_{u \in H_{n+M}} \left(1-\exp\left(-\sum_{t \in H_n}\frac{m(t,n)}{n^q}g\big(x h(v,u\oplus t)\big)\right)\right)\\
&\hspace{3.75in}\nu(dv)\nu_\alpha(dx) \\
\intertext{which, by a change of variable, equals}
&\frac{1}{(cn)^p}\iint\sum_{u \in H_{n+M}} \left(1-\exp\left(-\sum_{w \in A^\prime_n}\frac{m(w \ominus u,n)}{n^q}g\big(x h(v,w)\big)\right)\right)\\
&\hspace{3.85in}\nu(dv)\nu_\alpha(dx)\\
&=:\mathcal{I}_n\,.
\end{align*}
Here $w \ominus u:=w \oplus u^{-1}$, $u^{-1}$ is the inverse of $u$ in $(H,\oplus)$, and $A^\prime_n=H_M \cap
\{w^\prime:\,w^\prime \ominus u \in H_n\}$.

We claim that for all $n > M$
\begin{equation}
m(u^{-1},n-M) \leq m(w\ominus u, n) \leq m(u^{-1}, n+M)
\label{inequalities_of_m}\,.
\end{equation}
The first inequality follows, for example, because
\[
\tau \in [-(n-M)\mathbf{1},(n-M)\mathbf{1}] \cap (u^{-1}+K)
\]
if and only if
\[
\tau + w \in [-n\mathbf{1},n\mathbf{1}] \cap \big((w\ominus
u)+K\big)\,.
\]
Similarly we can prove the second inequality in
\eqref{inequalities_of_m}.

We bound $\mathcal{I}_n$ using \eqref{inequalities_of_m} by
\begin{align}
&\frac{1}{(cn)^p}\iint\sum_{u \in H_{n+M}} \left(1-\exp\left(-\sum_{w \in A^\prime_n}\frac{m(u^{-1},n+M)}{n^q}g\big(x h(v,w)\big)\right)\right) \nonumber\\
&\hspace{4in}\nu(dv)\nu_\alpha(dx) \nonumber\\
\intertext{which we claim to be equal to}
&=\frac{1}{(cn)^p}\iint\sum_{u \in H_{n+M}} \left(1-\exp\left(-\sum_{w \in A^\prime_n}\frac{m(u^{-1},n)}{n^q}g\big(x h(v,w)\big)\right)\right) \nonumber\\
&\hspace{3.9in}\nu(dv)\nu_\alpha(dx) \nonumber\\
&\;\;\;\;\;\;\;\;\;\;\;\;\;\;\;+o(1)=:\mathcal{I}^\prime_n+o(1)\,.\label{difference_of_I_n's}
\end{align}
To prove this claim observe that using the
inequality $|e^{-a}-e^{-b}| \leq |a-b|$, $(a,b>0)$ the difference of
the two integrals above can be bounded by
\begin{align*}
&\frac{1}{(cn)^p}\sum_{u \in H_{n+M}}\left(\frac{m(u^{-1},n+M)-m(u^{-1},n)}{n^q}\right)\\
&\hspace{1.9in}\times\,\iint \sum_{w \in H_M}g\big(x
h(v,w)\big)\nu(dv)\nu_\alpha(dx)
\end{align*}
which needs to be shown to converge to $0$ as $n \rightarrow \infty$. This is easy because $g \leq CI_{[-\infty,-\delta]\cup[\delta, \infty]}$ for some $C,\delta > 0$ (since $g \geq 0$ has compact support on $[-\infty,\infty] \setminus \{0\}$) which implies
\begin{align}
\iint \sum_{w \in H_M}g\big(x h(v,w)\big)\nu(dv)\nu_\alpha(dx) &\leq C \iint \sum_{w \in H_M} I_{\left(|x| \geq \delta/|h(v,w)|\right)} \nu_\alpha(dx)\nu(dv) \label{finiteness_of_the_integral} \\
&=C\delta^{-\alpha} \int_W \sum_{w \in H_M}|h(v,w)|^\alpha \nu(dv) < \infty,    \nonumber
\end{align}
and Lemma \ref{lemma_on_C_and_V} together with
\eqref{size_of_H_n} implies
\begin{align*}
&\left|\frac{1}{(cn)^p}\sum_{u \in H_{n+M}}\left(\frac{m(u^{-1},n+M)-m(u^{-1},n)}{n^q}\right)\right|\\
&=\frac{1}{(cn)^p}\sum_{u \in H_{n+M}}\left(\left(\frac{n+M}{n}\right)^q\frac{m(u,n+M)}{(n+M)^q}-\frac{m(u,n)}{n^q}\right)\\
&=\,o(1)+\frac{1}{c^p}\Bigg[\left(\frac{n+M}{n}\right)^{p+q}\frac{1}{(n+M)^p}\sum_{u \in H_{n+M}}\frac{m(u,n+M)}{(n+M)^q}\\
&\hspace{2.35in}-\frac{1}{n^p}\sum_{u \in
H_{n}}\frac{m(u,n)}{n^q}\Bigg] \rightarrow 0\,.
\end{align*}
This proves claim \eqref{difference_of_I_n's} which yields $\mathcal{I}_n
\leq \mathcal{I}^\prime_n + o(1)$. Similarly we can also get a lower
bound of $\mathcal{I}_n$ and establish that $\mathcal{I}_n \geq
\mathcal{I}^\prime_n + o(1)$. Hence, in order to complete the proof
of weak convergence of $\tilde{N}^{(2)}_n$ to $\tilde{N}_\ast$ under
the compact support assumption \eqref{assumption_cpt_support_h}, it
is enough to show that
\begin{align*}
&\mathcal{I}^\prime_n = \frac{1}{(cn)^p}\iint\sum_{u \in H_{n+M}}
\left(1-\exp\left(-\frac{m(u,n)}{n^q}\sum_{w \in A^\prime_n}g\big(x
h(v,w)\big)\right)\right)\\
&\hspace{3.8in} \nu(dv)\nu_\alpha(dx)
\end{align*}
converges to
\begin{equation}
l\frac{1}{c^p}\int_{C} \int_{|x|>0} \int_W
\left(1-\exp\left(-\mathcal{V}(y)\sum_{w \in H_M}
g(xh(v,w))\right)\right)\nu(dv)\nu_\alpha(dx)dy\,.
\label{limit_of_mathcal_I^prime_n}
\end{equation}
To this end we decompose the integral $\mathcal{I}^\prime_n$ into
two parts as follows:
\begin{align*}
&\;\;\;\;\mathcal{I}^\prime_n\\
&=\frac{1}{(cn)^p}\iint\sum_{u \in H_{n-M}} \left(1-\exp\left(-\frac{m(u,n)}{n^q}\sum_{w \in H_M}g\big(x h(v,w)\big)\right)\right)\\
&\hspace{3.9in}\nu(dv)\nu_\alpha(dx)\\
&\;\;\;\;\;+\frac{1}{(cn)^p}\iint\sum_{u \in B^\prime_n} \left(1-\exp\left(-\frac{m(u,n)}{n^q}\sum_{w \in A^\prime_n}g\big(x h(v,w)\big)\right)\right)\\
&\hspace{3.9in}\nu(dv)\nu_\alpha(dx)\\
&=: J^\prime_n+L^\prime_n
\end{align*}
for all $n > M$. Here $B^\prime_n=H_{n+M}\cap H_{n-M}^c$. For $1
\leq k \leq l$ let
\begin{align*}
&\;\;\;\;J^\prime_{k,n} \\
&= \frac{1}{(cn)^p}\iint\sum_{u \in F_{k,n-M}}
\left(1-\exp\left(-\frac{m(u,n)}{n^q}\sum_{w \in H_M}g\big(x
h(v,w)\big)\right)\right)\\
&\hspace{3.9in}\nu(dv)\nu_\alpha(dx)\,.
\end{align*}
Clearly, by \eqref{form_of_H_n_in_terms_of_F_k,n}, $J^\prime_n=\sum_{k=1}^lJ^\prime_{k,n}$. We will show that
each $J^\prime_{k,n}$, $1 \leq k \leq l$ converges to
\eqref{limit_of_mathcal_I^prime_n} except for the factor $l$.

Fix $k \in \{1,2,\ldots,l\}$. Repeating the argument in the proof of
\eqref{difference_of_I_n's} we obtain for all $n > M$,
\begin{align*}
&\;\;\;\;J^\prime_{k,n} \\
&=o(1)+\iint\frac{1}{(cn)^p}\sum_{u \in F_{k,n-M}} \left(1-e^{-\frac{m(u,n-M+L)}{n^q}\sum_{w \in H_M}g\big(x h(v,w)\big)}\right)\\
&\hspace{3.9in}\nu(dv)\nu_\alpha(dx)\\
&=o(1)\,+\,\left(\frac{n-M+L}{cn}\right)^p \times \\
&\;\;\;\;\;\;\iint\frac{1}{(n-M+L)^p}\sum_{\alpha \in Q^{(k)}_{n-M}}\left(1-e^{-\frac{m(x_k+U\alpha,n-M+L)}{n^q}\sum_{w \in H_M}g\big(x h(v,w)\big)}\right)\\
&\hspace{3.9in}\nu(dv)\nu_\alpha(dx)\\
\intertext{which can be estimated using \eqref{inclusions_of_Q_n} as
follows:}
&\leq o(1)\,+\,\left(\frac{n-M+L}{cn}\right)^p \times \\
&\;\;\;\int_{|x|>0}\int_W\int_{C}\left(1-e^{-\frac{m(x_k+\sum_{i=1}^{p}[(n-M+L)y_i]\bar{u}_i,n-M+L)}{n^q}\sum_{w \in H_M}g\big(x h(v,w)\big)}\right)\\
&\hspace{3.7in}dy\,\nu(dv)\,\nu_\alpha(dx)\,.\\
\intertext{By Lemma \ref{lemma_on_C_and_V} there is a constant
$\kappa >0$ such that the above integrand sequence is dominated by}
&\;\;\;\;\;\;\;\;\;\;\;\;\;1-\exp\left\{-\kappa\sum_{w \in H_M}g\big(x h(v,w)\big)\right\}\\
\intertext{which can be shown to be integrable using the inequality $1-e^{-x} \leq x,\;x>0$ and the arguments given in \eqref{finiteness_of_the_integral}.
Hence Lemma
\ref{lemma_on_C_and_V} together with the dominated convergence theorem
yields}
&\;\;\;\int_{|x|>0}\int_W\int_{C}\left(1-e^{-\frac{m(x_k+\sum_{i=1}^{p}[(n-M+L)y_i]\bar{u}_i,n-M+L)}{n^q}\sum_{w \in H_M}g\big(x h(v,w)\big)}\right)\\
&\hspace{3.7in}dy\,\nu(dv)\,\nu_\alpha(dx)\\
&\rightarrow \int_{C} \int_{|x|>0} \int_W
\left(1-\exp\left(-\mathcal{V}(y)\sum_{w \in H_M}
g(xh(v,w))\right)\right) \nu(dv)
\nu_\alpha(dx)dy\,.\\
\intertext{This shows}
&\;\;\limsup_{n\rightarrow \infty} J^\prime_{k,n} \\
& \leq \frac{1}{c^p}\int_{C} \int_{|x|>0} \int_W
\left(1-\exp\left(-\mathcal{V}(y)\sum_{w \in H_M}
g(xh(v,w))\right)\right) \nu(dv)
\nu_\alpha(dx)dy\,.\\
\intertext{Similarly we can also prove that}
&\;\;\liminf_{n\rightarrow \infty} J^\prime_{k,n} \\
& \geq \frac{1}{c^p}\int_{C} \int_{|x|>0} \int_W
\left(1-\exp\left(-\mathcal{V}(y)\sum_{w \in H_M}
g(xh(v,w))\right)\right) \nu(dv)
\nu_\alpha(dx)dy\,.\\
\intertext{Hence, $J^\prime_n$ converges to
\eqref{limit_of_mathcal_I^prime_n} as $n\rightarrow \infty$. To
establish the weak convergence of $\tilde{N}^{(2)}_n$ when $h$ is
compactly supported it remains to prove that $L^\prime_n \rightarrow
0$ as $n\rightarrow \infty$. This is easy because}
&\;\;L^\prime_n \\
&\leq \frac{1}{(cn)^p}\iint\sum_{u \in B^\prime_n} \left(1-\exp\left(-\frac{m(u,n)}{n^q}\sum_{w \in H_M}g\big(x h(v,w)\big)\right)\right)\nu(dv)\nu_\alpha(dx)\\
&=\frac{1}{(cn)^p}\iint\sum_{u \in H_{n+M}} \left(1-\exp\left(-\frac{m(u,n)}{n^q}\sum_{w \in H_M}g\big(x h(v,w)\big)\right)\right)\\
&\hspace{3.9in}\nu(dv)\nu_\alpha(dx)\\
&-\frac{1}{(cn)^p}\iint\sum_{u \in H_{n-M}} \left(1-\exp\left(-\frac{m(u,n)}{n^q}\sum_{w \in H_M}g\big(x h(v,w)\big)\right)\right)\\
&\hspace{3.9in}\nu(dv)\nu_\alpha(dx)\\
& \rightarrow 0
\end{align*}
since the first term can also be shown to converge to the same limit
as the second term by the exact same argument as above.

To remove the assumption of compact support on the function $h$, for
a general $h \in L^\alpha(\nu \otimes \tau)$ define
\begin{equation}
h_M(v,u)=h(v,u)I_{H_M}(u),\;\;\;M \geq 1. \label{defn_of_h_M}
\end{equation}
Notice that each $h_M$ satisfies \eqref{assumption_cpt_support_h}
and that $h_M \rightarrow h$ almost surely as well as in
$L^\alpha(\nu \otimes \tau)$ as $M \rightarrow \infty$. Denote
\begin{equation}
\tilde{N}^{(2,M)}_n=\sum_{i=1}^\infty \sum_{t \in
H_n}\frac{m(t,n)}{n^q}\,\delta_{(cn)^{-p/\alpha}j_ih_M(v_i,u_i\oplus
t)}\,, \label{defn_of_tilde_N^2,M}
\end{equation}
for $M,n \geq 1$, and
\begin{equation}
\tilde{N}^{(M)}_\ast = \sum_{i=1}^\infty \sum_{u \in H}
\mathcal{V}(\xi_i) \delta_{j_ih_M(v_i,u)}\,,\;\;\;M \geq 1
\label{defn_of_tilde_N^M_star}
\end{equation}
with the notations as above. We already know that for every $M \geq
1$, $\tilde{N}^{(2,M)}_n \Rightarrow \tilde{N}^{(M)}_\ast$ weakly in
the space $\mathcal{M}$ as $n \rightarrow \infty$. Therefore, to
establish $\tilde{N}^{(2)}_n \Rightarrow \tilde{N}_\ast$, it is
enough to show two things:
\begin{equation}
\tilde{N}^{(M)}_\ast \Rightarrow \tilde{N}_\ast\;\;\mbox{ weakly as
}M \rightarrow \infty  \label{limit_of_tilde_N^M_star}
\end{equation}
and
\begin{equation}
\lim_{M\rightarrow \infty} \limsup_{n\rightarrow \infty}
P(|\tilde{N}_n^{(2,M)}(g)-\tilde{N}^{(2)}_n(g)|>\epsilon)=0
\label{difference_of_tilde_N^2,K_and_tilde_N^2}
\end{equation}
for all $\epsilon > 0$ and for every non-negative continuous
function $g$ with compact support on $[-\infty,\infty] \setminus \{0\}$.

Claim \eqref{limit_of_tilde_N^M_star} is easy since the Laplace functional
of $\tilde{N}^{(M)}_\ast$, which is obtained by replacing $h$ in
\eqref{form_of_laplace_functional_of_tilde_N_star} by $h_M$,
converges by the dominated convergence theorem to
\eqref{form_of_laplace_functional_of_tilde_N_star} for every
non-negative continuous function $g$ with compact support on
$[-\infty,\infty] \setminus \{0\}$. The proof of
\eqref{difference_of_tilde_N^2,K_and_tilde_N^2} is along the same
lines as the proof of the corresponding limit (namely $(3.13)$) in \cite{resnick:samorodnitsky:2004}. Using
similar calculations we have
\begin{align*}
&\;\;E|\tilde{N}_n^{(2,M)}(g)-\tilde{N}^{(2)}_n(g)|\\
&=\sum_{t \in H_n} \frac{m(t,n)}{n^q}E\left(\sum_{i=1}^{\infty}g((cn)^{-p/\alpha}j_ih(v_i,u_i\oplus t))I(N(u_i\oplus t)>M)\right)\\
&=\left(\frac{1}{(cn)^p}\sum_{t\in
H_n}\frac{m(t,n)}{n^q}\right)\int_W\int_{|x|>0}\sum_{u \in
H_M^c}g(xh(v,u))\nu_\alpha(dx)\nu(dv)\,.
\intertext{Repeating the argument in \eqref{finiteness_of_the_integral}, the integral}
&\int_W\int_{|x|>0}\sum_{u \in H_M^c}g(xh(v,u))\nu_\alpha(dx)\nu(dv)
\intertext{can be shown to be bounded by}
&C \delta^{-\alpha}\int_W \sum_{u \in H_M^c} |h(u,v)|^\alpha \nu(dv)
\end{align*}
which converges to $0$ as $M \rightarrow \infty$. Hence, by Lemma
\ref{lemma_on_C_and_V},
\eqref{difference_of_tilde_N^2,K_and_tilde_N^2} follows and so does
$\tilde{N}^{(2)}_n \Rightarrow \tilde{N}_\ast$ without the
assumption of compact support.

To complete the proof of the theorem, we need to prove (with $\rho$
being the vague metric on $\mathcal{M}$) that for all $\epsilon >0$
\[
P[\rho(\tilde{N}_n,\tilde{N}^{(2)}_n)>\epsilon] \rightarrow 0
\;\;\;(n \rightarrow \infty)
\]
and for this, it suffices to show that for every non-negative
continuous function $g$ with compact support on
$[-\infty,\infty] \setminus \{0\}$,
\begin{align}
&\;\;\;\;\;\;\;\;\;P(|\tilde{N}_n(g)-\tilde{N}^{(2)}_n(g)|>\epsilon)     \nonumber \\
&\;\;\;\;\;=P\left(\left|\sum_{t \in H_n} \frac{m(t,n)}{n^q}\left(g\left(\frac{X_t}{(cn)^{p/\alpha}}\right)-\sum_{i=1}^\infty g\left(\frac{j_i h(v_i,u_i\oplus t)}{(cn)^{p/\alpha}}\right)\right) \right| >\epsilon \right)\label{difference_of_tilde_N_and_tilde_N^2}\\
&\;\;\;\;\;\rightarrow 0 \nonumber
\end{align}
as $n \rightarrow \infty$. By Lemma \ref{lemma_on_C_and_V},
\eqref{difference_of_tilde_N_and_tilde_N^2} would follow from
\begin{equation}
\;\;\;\;\;\;\;\;P\left(\left|\sum_{t \in
H_n}\left(g\left(\frac{X_t}{(cn)^{p/\alpha}}\right)-\sum_{i=1}^\infty
g\left(\frac{j_i h(v_i,u_i\oplus t)}{(cn)^{p/\alpha}}\right)\right)
\right| >\epsilon/\kappa_0 \right)\rightarrow
0\,.\label{difference_of_tilde_N_and_tilde_N^2_restated}
\end{equation}
Here $\kappa_0$ is as in \eqref{defn_of_kappa_0}. Once again,
following verbatim the proof of $(3.14)$ in
\cite{resnick:samorodnitsky:2004}, we can establish
\eqref{difference_of_tilde_N_and_tilde_N^2_restated} and complete
the proof of Theorem \ref{thm_point_process_Gdiss}.

\section{An Example} \label{sec_example}

We end this paper by considering a simple example and computing
the weak limit of the corresponding random measure (properly
normalized $\{N_n\}$) using Theorem \ref{thm_point_process_Gdiss}.
This will help us understand the result as well as get used to the
notations.

\begin{example}
\textnormal{Suppose $d=2$, and define the $\mathbb{Z}^2$-action
$\{\phi_{(t_1,t_2)}\}$ on $S=\mathbb{R}$ as
\[
\phi_{(t_1,t_2)}(x)=x+t_1-t_2\,.
\]
Take any $f \in L^\alpha(S,\mu)$ where $\mu$ is the Lebesgue measure
on $\mathbb{R}$ and define a stationary $S\alpha S$ random field
$\{X_{(t_1,t_2)}\}$ as follows
\[
X_{(t_1,t_2)}=\int_{\mathbb{R}} f\big(\phi_{(t_1,t_2)}(x)\big)\,
M(dx),\;\;\;t_1,t_2 \in \mathbb{Z}\,,
\]
where $M$ is an $S \alpha S$ random measure on $\mathbb{R}$ with
control measure $\mu$. Note that the above representation of
$\{X_{(t_1,t_2)}\}$ is of the form $(\ref{repn_integral_stationary})$ generated by a
measure preserving conservative action with $c_{(t_1,t_2)} \equiv
1$.}

\textnormal{In this case, using the notations as above, we have
\begin{align*}
K&=\{(t_1,t_2)\in \mathbb{Z}^2:\,t_1=t_2\} \\
\intertext{which implies $A \simeq \mathbb{Z}^2/K \simeq
\mathbb{Z}\,,$ and} F&=\{(t_1,0):\,t_1 \in \mathbb{Z}\}\,.
\end{align*}
In particular we have $p=q=l=1$, and
\[
U=\left[\begin{array}{c}
1\\
0
\end{array}
\right],\;V=\left[\begin{array}{c}
1\\
1
\end{array}\right]
\]
so that
\begin{align*}
C&=\{y \in \mathbb{R}:\, \mbox{ there exists }\lambda \in \mathbb{R} \mbox{ such that }\|Uy+V\lambda\|_\infty \leq 1\}\\
&=\{y \in \mathbb{R}:\,|y+\lambda| \leq 1 \mbox{ for some }\lambda \in [-1,1]\}\,=\,[-2,2]\,.\\
\intertext{For all $y \in C=[-2,2]$ we have} P_y&=\{\lambda \in
[-1,1]:\,|y+\lambda| \leq 1\}=\left\{
\begin{array}{ll}
[-(1+y),1] &\;\;\;y \in [-2,0) \\
\,\,[-1,1-y] &\;\;\;y \in [0,2]
\end{array}
\right.\\
\intertext{which yields} &\mathcal{V}(y)=2-|y|\,,\;\;\;y \in
[-2,2]\,.
\end{align*}}
\textnormal{\;\;\;\,Clearly, $\{X_{(t_1,0)}\}_{t_1 \in \mathbb{Z}}$
is a stationary $S\alpha S$ process generated by a dissipative $\mathbb{Z}$-action
$\{\phi_{(t_1,0)}\}_{t_1\in\mathbb{Z}}$. Hence, by Theorem $4.4$ in
\cite{rosinski:1995}, there is a $\sigma$-finite standard measure
space $(W,\nu)$ and a function $h \in L^\alpha(W \times \mathbb{Z},
\nu \otimes \zeta_{\mathbb{Z}})$ such that
\[
X_{(t_1,0)} \eqdef \int_{W \times \mathbb{Z}}
h(v,t_1+s)M(dv,ds)\,,\;\;\;t_1\in \mathbb{Z}\,.
\]
Here $\zeta_{\mathbb{Z}}$ is the counting measure on $\mathbb{Z}$, and
$M$ is an $S \alpha S$ random measure on $W \times \mathbb{Z}$ with
control measure $\nu \otimes \zeta_{\mathbb{Z}}$. Let
$$\sum_{i=1}^{\infty}\delta_{(j_i,v_i,\xi_i)} \sim
PRM\big(\nu_\alpha \otimes \nu \otimes
\frac{1}{4}Leb|_{[-2,2]}\big)$$ be a Poisson random measure on
$([-\infty,\infty] \setminus \{0\}) \times W \times [-2,2]$. In this example,
$c=\big(l|C|\big)^{1/p}=4$ and
$$\tilde{N}_n = n^{-1}
\sum_{|t_1|,\,|t_2| \leq n}
\delta_{(4n)^{-1/\alpha}X_{(t_1,t_2)}}\,,\;\;\;n=1,2,\ldots\,.$$
Since $\{\phi_u\}_{u \in F}$ is a dissipative group action and
\eqref{assumption_on_c_t_for_t_in_K} holds in this case, we can use
Theorem \ref{thm_point_process_Gdiss} and conclude that
\[
\tilde{N}_n \Rightarrow \sum_{i=1}^{\infty}\sum_{t_1 \in
\mathbb{Z}}\big(2-|\xi_i|\big)\,\delta_{j_ih(v_i,t_1)}
\]
weakly in the space $\mathcal{M}$.}
\end{example}

\begin{remark}
\textnormal{Note that $\tilde{N}_n$ can also be written as follows:
\[
\tilde{N}_n=\sum_{k=-2n}^{2n}\left(2-\frac{|k|}{n}+\frac{1}{n}\right)\,\delta_{(4n)^{-1/\alpha}Y_k}
\]
where $Y_k=X_{(k,0)}$. Only a few (a Poisson number) of the $Y_k$'s are not driven to
zero by the normalization $b_n=(4n)^{-1/\alpha}$. By stationarity,
each of these rare $k$'s should be distributed uniformly in
$\{-2n,-2n+1,\ldots,2n\}$ which along with Theorem \ref{thm_point_process_Z^d_diss} provides an intuitive justification of the above weak
limit of $\tilde{N}_n$.}
\end{remark}

\section*{Acknowledgements}

The author is immensely grateful to Gennady Samorodnitsky for the extremely useful discussions and valuable suggestions which contributed a lot to this work, to Paul Embrechts for the support and the suggestions during the author's stay at RiskLab, and to Jan Rosi\'nski for Remark \ref{remark_jan} which significantly improved this paper. He is also thankful to Johanna Neslehova, Augusto Teixeira and the anonymous referees for their comments.

\end{document}